\documentclass[a4paper,12pt
]{article}
\usepackage{amsmath,amssymb,amsthm}
\usepackage{mathtools}
\usepackage{appendix}
\numberwithin{equation}{section}
\usepackage[margin=20mm]{geometry}

\newtheorem{thm}{Theorem}[section]
\newtheorem{prop}[thm]{Proposition}

\newtheorem{exam}[thm]{Example}
\newtheorem{rem}[thm]{Remark}
\newtheorem{lem}[thm]{Lemma}

\newtheorem{assum}[thm]{Assumption}

\def\E{{\cal E}}
\def\F{{\cal F}}

\def\R{{\mathbb R}}

\def\d{{\rm d}}

\makeatletter

\title{
Essential spectrum for Brox-type diffusion processes
}

\author{Yuichi Shiozawa\thanks{Department of Mathematical Sciences, Faculty of Science and Engineering,
Doshisha University,
1-3, Tatara Miyakodani, Kyotanabe, Kyoto, 610-0394,
Japan; \texttt{yshiozawa@mail.doshisha.ac.jp}}\qquad
Jian Wang\thanks{School of Mathematics and Statistics \& Key Laboratory of Analytical Mathematics and Applications (Ministry of Education) \& Fujian Provincial Key Laboratory of Statistics and Artificial Intelligence,
Fujian Normal University, Fuzhou, 350007, P.R. China; \texttt{jianwang@fjnu.edu.cn}}}

\begin{document}
\maketitle
\begin{abstract}
This paper investigates the essential spectrum and compactness property of Markov semigroups
generated by multi-dimensional Brox
diffusion processes
under two types of random media: stationary Gaussian random fields and semi-selfsimilar L\'evy random fields.
For Gaussian environments satisfying general stationary covariance growth conditions,
we prove
that
the associated semigroup is almost surely noncompact;
if the covariance function grows sublinearly at infinity,
then
the bottom of the essential spectrum vanishes
almost surely,
with fractional Brownian fields as a concrete example.
For multi-dimensional semi-selfsimilar L\'evy environments with scaling index $\alpha\in(1,2)$,
we show
that
the essential spectral bottom equals zero almost surely for arbitrary space dimension $d\ge1$, and the same conclusion holds for one-dimensional symmetric $\alpha$-stable L\'evy processes with any $\alpha\in(0,2)$. Furthermore, we study one-dimensional diffusion operators perturbed by random L\'evy drift. When $0<\delta\le 1/\alpha$, random environmental fluctuations can destroy the compactness of semigroups induced by deterministic power-law potentials $\pm|x|^\delta$, even if the unperturbed semigroup is compact.
The analysis relies on almost sure volume growth estimates for the random reference measure induced by environmental potentials,
sample path asymptotics of random fields, ergodic theory of scaling transforms, and spectral criteria for regular Dirichlet forms.
A unified framework linking sample path
behaviors
of random potentials to spectral characteristics of diffusion semigroups is established,
and several open problems for large potential exponents are stated.

\end{abstract}
\medskip
\noindent
{\bf AMS 2020
Mathematics subject classification}:
60J45, 82D30, 47D07, 47A10

\medskip\noindent
{\bf Keywords and phrases}:
Brox diffusion
process;
random environment; Gaussian random field; semi-selfsimilar L\'evy random field; essential spectrum; semigroup compactness; Dirichlet form

\section{Introduction}
Diffusion processes
in random media are a central topic in probability theory and statistical physics.
One of the core objects in this topic is the so-called Brox diffusion processes, initiated by Brox \cite{B86} in one spatial dimension.
In this paper, we are concerned with spectral properties of the multi-dimensional Brox diffusion operator
\[
L^{W}=\frac12\big(\Delta-\nabla W\cdot\nabla\big)
\]
defined on $L^2(\mathbb{R}^d;\mu^W)$ with a random reference measure $\mu^W({\rm d}x)=e^{-W(x)}\,{\rm d}x$,
where $W(x)$ stands for a random environmental potential.
The essential spectrum $\sigma_{\text{ess}}(-L^W)$ determines the compactness of the associated semigroup $\{P_t^W\}_{t>0}$.
In particular, $\{P_t^W\}_{t>0}$ is compact if and only if $\lambda_{\text{ess}}^W:=\inf\sigma_{\text{ess}}(-L^W)=\infty$;
that is, $\{P_t^W\}_{t>0}$ is noncompact if and only if $\lambda_{\text{ess}}^W\in [0,\infty)$.

Classical results focus on deterministic power potentials $w(x)=\pm|x|^\delta$ (here we write $W(x)$ as $w(x)$ in the deterministic setting).
Then, the associated semigroup $\{P_t^w\}_{t>0}$ is compact if and only if $\delta>1$.
Recall that the zero potential (i.e., $w(x)=0$ for all $x\in \R^d$) reduces to $\frac12\Delta$ with fully noncompact semigroup.
These deterministic benchmarks fail to capture real-world disordered systems,
where potentials are fluctuating random fields.
Two mainstream random field models remain understudied for the compactness:
stationary Gaussian fields (including fractional Brownian fields with long-range dependence) and semi-selfsimilar L\'evy fields with scaling jump paths.
Existing literature mainly addresses recurrence/transience (see, e.g., \cite{FNT87, KK20, KTT15, KTT16, KTT17, Tanaka93}),
lacking systematic quenched analysis of
the
essential spectrum and the key open question:
can random drift perturbations break the compactness of confining deterministic diffusion semigroups?

Theoretically, this work builds a universal link among
the
random field sample path behaviors,
the volume growth of
the
random measure $\mu^W$,
and the essential spectrum
for the Dirichlet form.
We develop tailored probabilistic tools for Gaussian extremal estimates and L\'evy scaling ergodicity,
extending deterministic spectral criteria to almost sure quenched settings for random media.
We also resolve a fundamental qualitative spectral problem,
revealing that random fluctuations can completely alter
the
particle confinement encoded by deterministic potentials.

Practically,
diffusion processes in random environment
model particle transport in porous materials \cite{HB}, turbulent flow \cite{AM} and amorphous semiconductors \cite{An}:
on the other hand, the compactness of Markovian semigroups is related to the confinement property of the drift part
(see, e.g., \cite[Section 4.10]{BGL14} or \cite[Section 4.5]{P14}).
Our conclusions quantify how correlated Gaussian disorder or jump-type L\'evy disorder destroys confinement, supporting simulation of heterogeneous media transport. Meanwhile, fractional Brownian and semi-selfsimilar L\'evy fields widely appear in finance, hydrology and risk modeling, making our spectral results applicable to long-term fluctuation analysis of stochastic signals. The main contributions of the paper are as follows:
\begin{enumerate}
    \item \textbf{Spectral noncompactness under Gaussian environment (Theorem \ref{thm:gauss}).}\,\,
    Under general stationary Gaussian covariance assumptions covering fractional Brownian fields,
    we prove
    that
    $\{P_t^W\}_{t>0}$ is almost surely noncompact when
    the
    covariance grows sublinearly at infinity.
    If
    the
    covariance decays sufficiently fast relative to
    the
    spatial radius,
    then
    the bottom of essential spectrum satisfies $\lambda_{\text{ess}}^W=0$ $Q$-a.s.
    We establish three almost sure volume growth properties of $\mu^W$ as key intermediate results (Proposition \ref{prop:volume-gamma}).

    \item \textbf{Semi-selfsimilar L\'evy environments in arbitrary dimension
    (Theorem \ref{thm:semi}).}\,\,
    We construct multi-dimensional random fields from independent semi-selfsimilar L\'evy processes with scaling index $\alpha\in(0,2)$.
    For $\alpha\in(1,2)$, if one has
    positive probability of negative L\'evy increments,
    then
    $\lambda_{\text{ess}}^W=0$ almost surely for all $d\ge1$.
    The main ingredients for the proof are the strong mixing property of the scaling transform and uniform upper bounds of sample paths for semi-selfsimilar L\'evy processes.
    When the random field is given by
    one-dimensional symmetric $\alpha$-stable processes with any $\alpha\in(0,2)$,
    we also establish $\lambda_{\text{ess}}^W=0$ almost surely via divergence of L\'evy exponential functionals
    and the noncompactness criterion of Pinsky \cite{P09} for one-dimensional diffusion operators (Proposition \ref{prop:symm-stable}).

    \item \textbf{Random drift destroys semigroup compactness of confining potentials
    (Proposition \ref{prop:drift}).}
    In one dimension, we perturb compact-generating power potentials $|x|^\delta$ and $-|x|^\delta$ by
    the
    L\'evy random drift.
    When $0<\delta\le1/\alpha$, the perturbed generator has zero essential spectral bottom and so generates a noncompact semigroup.
      Notably,
    if $\alpha\in(0,1)$ and thus  $1/\alpha>1$, then we can take $\delta\in (1,1/\alpha)$ so that  the original semigroup is compact
    but the perturbed one is noncompact. This proves that random fluctuations can reverse deterministic confinement properties.

\end{enumerate}

For both Gaussian and semi-selfsimilar L\'evy models,
we follow a consistent framework:
(a) to
analyze random field sample paths to derive
the
volume growth of $\mu^W$;
(b) to
apply the spectral criteria via
the Dirichlet form;
(c) to
translate spectral results to the compactness of
the
Markov semigroup. We believe that such approach can be generalized to other random potential models.

\ \

The rest of this paper is organized as follows.
Section \ref{sect:prelim} provides preliminaries on Dirichlet forms and spectral criteria.
Section \ref{sect:gauss} treats Gaussian environments, Section \ref{e:section2.2} considers semi-selfsimilar L\'evy media,
and Section \ref{section:per} discusses perturbation effects of
the
random drift on deterministic diffusion operators.

\section{Preliminaries}\label{sect:prelim}
In this section, we first introduce our setting which is based on the Dirichlet form theory.
We then recall a relation between the compactness property of Markovian semigroups with
the spectral property of generators.

Let ${\rm d}x$ be the Lebesgue measure on ${\mathbb R}^d$,
and $w: \R^d \to \R$ a locally bounded measurable function.
Let $\mu^w$ be a measure on ${\mathbb R}^d$ defined by $\mu^w({\rm d}x)=e^{-w(x)}\,{\rm d}x$,
which is a positive Radon measure on ${\mathbb R}^d$ with full support.
We now define the quadratic form $({\cal E}^w,{\cal D}({\cal E}^w))$ on $L^2({\mathbb R}^d;\mu^w)$ as follows:
\begin{equation*}
\begin{split}
{\cal D}({\cal E}^w)
&=\left\{u\in L^2({\mathbb R}^d;\mu^w): \int_{{\mathbb R}^d}|\nabla u(x)|^2 e^{-w(x)}\,{\rm d}x<\infty\right\},\\
{\cal E}^w(u,v)
&=\frac{1}{2}\int_{{\mathbb R}^d}\nabla u(x)\cdot \nabla v(x) e^{-w(x)}\,{\rm d}x, \quad u,v\in {\cal D}({\cal E}^w).
\end{split}
\end{equation*}
We can then induce the Banach norm $\|\cdot\|_w$ on ${\cal D}({\cal E}^w)$ by
\[
\|u\|_w=\left({\cal E}^w(u,u)+\|u\|_{L^2(\R^d;\mu^w)}^2\right)^{1/2}, \quad u\in {\cal D}({\cal E}^w).
\]
Let $C_c^{\infty}({\mathbb R}^d)$ denote the set of smooth functions with compact support in ${\mathbb R}^d$.
Then $C_c^{\infty}({\mathbb R}^d)\subset {\cal D}({\cal E}^w)$
and $({\cal E}^w,C_c^{\infty}({\mathbb R}^d))$ is a symmetric Markovian closable form
(see, e.g.,  \cite[Section 1.1]{FOT11} for definition).
In fact, the Markovian property follows from a similar argument as in \cite[Example 1.2.1]{FOT11},
and the closability is proved in \cite[Lemma I.1]{M94}.
Hence, if we write $({\cal E}^w,{\cal F}^w)$ for the closure of $({\cal E}^w,C_c^{\infty}({\mathbb R}^d))$ with respect to the norm $\|\cdot\|_w$,
then $({\cal E}^w,{\cal F}^w)$ is a regular Dirichlet form on $L^2({\mathbb R}^d;\mu^w)$.

Let $L^w$ be a non-positive definite self-adjoint operator on $L^2(\R^d;\mu^w)$ such that
its domain $D(L^w)$ satisfies $D(L^w)\subset {\cal F}^w$ and
\[
{\cal E}^w(u,v)=\langle-L^w u,v\rangle_{L^2(\R^d;\mu^w)}, \quad u\in D(L^w), \ v\in {\cal F}^w
\]
(\cite[Theorem 1.3.1 and Corollary 1.3.1]{FOT11}).
Then $L^w$ has the following formal expression:
\[
L^w=\frac{1}{2}(\Delta-\nabla w\cdot \nabla)=\frac{1}{2}e^{w}\sum_{k=1}^d\frac{\partial}{\partial x_k}\left(e^{-w}\frac{\partial }{\partial x_k}\right).
\]
Let $\sigma_{{\rm ess}}(-L^w)$ be the set of the essential spectrum of the operator $-L^w$,
and let $\lambda_{{\rm ess}}^w=\inf\sigma_{{\rm ess}}(-L^w)$ denote the bottom of the essential spectrum of $-L^w$.
If we write $\{P_t^w\}_{t>0}$ for the Markovian semigroup on $L^2(\R^d;\mu^w)$
generated by $L^w$,
then $\{P_t^w\}_{t>0}$ is compact if and only if $\sigma_{{\rm ess}}(-L^w)=\emptyset$,
that is, $\lambda_{{\rm ess}}^w=\infty$ (see, e.g., \cite[Theorem 0.3.9]{Wa05}).

If we take $w=0$, then $\mu^0({\rm d}x)={\rm d}x$ and the generator $L^0$ is nothing but the half of the Laplace operator on $\R^d$.
In particular, $\sigma_{{\rm ess}}(-L^0)=[0,\infty)$
and the corresponding Markovian semigroup
on $L^2(\R^d;{\rm d}x)$ is noncompact.
On the other hand,
if $w\ne 0$, then the noncompactness property of the Markovian semigroup
depends on the behavior of the function $w$ at infinity.
For instance, by using \cite[Corollary 1]{P09}, we can verify the assertion as follows: For $\delta>0$, let $w(x)=|x|^{\delta}$ or $w(x)=-|x|^{\delta}$ for $x\in \R^d$.
Then the Markovian semigroup $\{P_t^w\}_{t>0}$ on $L^2(\R^d; \mu^w)$ is noncompact
if and only if $0<\delta\le1$.
In particular, if $\delta=1$, then $\lambda_{{\rm ess}}^w\in (0,\infty)$.

\section{Gaussian environment}\label{sect:gauss}
In this section, we show the noncompactness of the Markovian
semigroup
associated with the Brox diffusion process in the Gaussian environment.
Let $(\{W(x)\}_{x\in {\mathbb R}^d},Q)$ be a real-valued continuous centered Gaussian process with index set $\R^d$.
Following the formulation of K\^ono \cite{K75},
we make the next  assumption on the covariance structure of $(\{W(x)\}_{x\in {\mathbb R}^d},Q)$.
In what follows, we use the notation $E$ for the expectation with respect to the probability measure $Q$.

\begin{assum}\label{assum:1}\rm
\begin{enumerate}
\item[{\rm (1)}]
The covariance structure of $(\{W(x)\}_{x\in {\mathbb R}^d},Q)$ is stationary, that is,
there exists a function $\gamma:{\mathbb R}^d \to [0,\infty)$ such that
$\gamma(0)=0$, $\gamma(x)>0$ for $x\ne 0$, and
\[
E[(W(x)-W(y))^2]=\gamma(x-y)^2, \quad x,y\in {\mathbb R}^d.
\]
Moreover, there exists a family of nondecreasing functions $\sigma_i: [0,\infty) \to [0,\infty)$, $1\le i\le l$,
such that,
for some positive integers $d_1,\dots,d_l$ with $d_1+\dots+d_l=d$,
\[
\gamma(x)^2=\sum_{i=1}^l \sigma_i(|x_i|)^2, \quad x=(x_1,\dots, x_l)\in {\mathbb R}^{d_1}\times \cdots\times {\mathbb R}^{d_l}.
\]
\item[{\rm (2)}]
The functions $\gamma$ and $\sigma_i$,
$1\le i\le l$,  in {\rm (1)} satisfy the following conditions.
\begin{enumerate}
\item The function $\gamma(x)$ satisfies
\begin{equation}\label{eq:gamma-1}
\lim_{|x|\rightarrow\infty}\frac{\gamma(x) \sqrt{\log\log |x|}}{\log |x|}=\infty.
\end{equation}

\item For every $1\le i\le l$, the function $\sigma_i$ is non-decreasing and continuously differentiable on $[0,\infty)$.
In particular, there exist constants $\alpha_i\in (0,1)$ and $\beta_i\in (0,1)$
such that $\sigma_i$ is regularly varying at $x=0$ with rate $\alpha_i$, and at $x=\infty$ with rate $\beta_i$, respectively.

\item There exists a constant $c_0>0$ such that for every $1\le i\le l$,  
$x|\sigma_i'(x)|\le c_0\sigma_i(x)$ for any $x> 0$.
\end{enumerate}
\end{enumerate}
\end{assum}

For each $Q$-a.s.\ realization of $\{W(x)\}_{x\in {\mathbb R}^d}$,
$W$ is a continuous function on $\R^d$ and so locally bounded.
Hence we can introduce a regular Dirichlet form $(\E^W,\F^W)$ on $L^2(\R^d,\mu^W)$ as in
Section \ref{sect:prelim}.
Let $\{P_t^W\}_{t>0}$ be the Markovian semigroup on $L^2({\mathbb R}^d;\mu^W)$
associated with $({\cal E}^W,{\cal F}^W)$,
and let $\lambda_{{\rm ess}}^W$ be the bottom of the essential spectrum of $-L^W$.
We then have
\begin{thm}\label{thm:gauss}
Let Assumptions {\rm \ref{assum:1}}  be fulfilled.
If
\begin{equation}\label{eq:gamma-2}
\limsup_{|x|\rightarrow\infty}\frac{\gamma(x)\sqrt{\log\log |x|}}{|x|}<\infty,
\end{equation}
then $\{P_t^W\}_{t>0}$ is
noncompact, $Q$-a.s.
In particular, if there exists some $p\in (0,1)$ such that
\begin{equation}\label{eq:gamma-3}
\limsup_{|x|\rightarrow\infty}\frac{\gamma(x)\sqrt{\log\log |x|}}{|x|^p}=0,
\end{equation}
then $\lambda_{{\rm ess}}^W=0$, $Q$-a.s.
\end{thm}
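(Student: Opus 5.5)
The plan is to reduce the statement to volume growth estimates for $\mu^W$ and then apply a spectral criterion for the regular Dirichlet form $(\E^W,\F^W)$. The criterion I will use is the following. If there are balls $B(x_n,r_n)$ with $r_n\to\infty$, pairwise disjoint, such that the ratios
\[
\frac{\mu^W(B(x_n,2r_n))}{\mu^W(B(x_n,r_n))}
\]
stay bounded, then $\lambda_{\rm ess}^W\le C\limsup_n r_n^{-2}$ in a suitable sense. Precisely, take Lipschitz cutoffs $\phi_n$ equal to $1$ on $B(x_n,r_n)$, supported in $B(x_n,2r_n)$, with $|\nabla\phi_n|\le 1/r_n$. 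Their normalizations $\phi_n/\|\phi_n\|_{L^2(\mu^W)}$ are orthonormal and have Rayleigh quotients
\[
\frac{\E^W(\phi_n,\phi_n)}{\|\phi_n\|^2}\le \frac{\mu^W(B(x_n,2r_n))}{2r_n^2\,\mu^W(B(x_n,r_n))}.
\]
This is a Weyl-type (Persson-type) characterization: an infinite orthonormal family with Rayleigh quotients bounded by $\lambda$ forces $\lambda_{\rm ess}^W\le\lambda$. Noncompactness only requires bounded Rayleigh quotients along a sequence with $r_n$ fixed and large. Showing $\lambda_{\rm ess}^W=0$ requires the quotients to tend to $0$, that is, $r_n\to\infty$ with controlled volume ratio.

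The probabilistic input is a sample-path oscillation bound for $W$. By Kôno-type uniform modulus estimates under Assumption \ref{assum:1}(2)(b),(c) (regular variation of $\sigma_i$ and the derivative bound), the following should hold $Q$-a.s.: for centers $x_n$ at distance about $R_n$ and radius $r$,
\[
\sup_{|y-x_n|\le 2r}|W(y)-W(x_n)|\le C\,\gamma(r)\sqrt{\log\log R_n}.
\]
The $\sqrt{\log\log}$ factor comes from a union bound over the polynomially many disjoint balls at scale $R_n$, combined with Gaussian concentration (Borell–TIS) and Dudley's entropy bound. Given this bound, $e^{-W}$ varies by a factor at most $e^{2C\gamma(r)\sqrt{\log\log R_n}}$ on the ball. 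The volume ratio is therefore at most $2^d e^{4C\gamma(r)\sqrt{\log\log R_n}}$.

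The two cases then go as follows.

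\textbf{Case \eqref{eq:gamma-2}.} Choose $r_n=R_n$ (balls comparable to their distance from the origin, placed dyadically so they are disjoint). Then $\gamma(r_n)\sqrt{\log\log R_n}\le C r_n$. This gives a ratio $\le e^{Cr_n}$ and a Rayleigh quotient $\le e^{Cr_n}/r_n^2$, which is not bounded. So this choice is too crude, and I would instead take $r$ fixed but large and $\gamma$-scale oscillation. The plan is to use Assumption \ref{assum:1}(2)(a), i.e.\ \eqref{eq:gamma-1}, together with a law of the iterated logarithm. A Chung-type LIL / small-ball estimate should give infinitely many far-away unit-order balls on which the oscillation of $W$ is bounded: small-ball probabilities decay like $e^{-c/\ldots}$, and there are many independent-ish trials thanks to the weak dependence implied by stationarity of increments. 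Alternatively, I would compare with the linear bound: \eqref{eq:gamma-2} means the oscillation on balls of radius $r$ is $O(r)$. Exactly as for the deterministic $w=\pm|x|$, which gives $\lambda_{\rm ess}\in(0,\infty)$, I would use test functions $e^{W/2}\phi_n$-type, i.e.\ ground-state transformed cutoffs. I expect the main obstacle to be the case $\eqref{eq:gamma-2}$: converting the linear oscillation bound into finite Rayleigh quotients with $r_n\to\infty$. My first attempt will be to pick balls where $W$ is near its local minimum, so that the mass of $\mu^W$ concentrates inside $B(x_n,r_n)$.

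\textbf{Case \eqref{eq:gamma-3}.} Take $r_n=R_n^{q}$ with $q\in(0,1)$. Then $\gamma(r_n)\sqrt{\log\log R_n}=o(R_n^{pq})$, and using Assumption \ref{assum:1}(2)(b) on regular variation this is $o(r_n^{p})$ after the comparison $\log\log R_n\asymp\log\log r_n$. That alone is not bounded. To get a bounded ratio I would instead take $r_n$ such that $\gamma(r_n)\sqrt{\log\log R_n}=O(1)$ on suitably chosen good balls, again selected via a small-deviation argument. The condition $p<1$ leaves room for $r_n\to\infty$ while keeping $e^{O(r_n^p)}/r_n^2$ summable into a subsequence with quotient $\to0$. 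This is precisely the content I expect Proposition \ref{prop:volume-gamma} to package, and the final step is then to plug its conclusion into the criterion above.
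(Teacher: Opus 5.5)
\textbf{Verdict: there is a genuine gap.} Your proposal does not yet prove either assertion, and the route you chose cannot, as set up, give $\lambda_{\rm ess}^W=0$.

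\textbf{What goes wrong.} Cutoffs with $|\nabla\phi_n|\le 1/r_n$ only turn a doubling ratio $\mu^W(B(x_n,2r_n))/\mu^W(B(x_n,r_n))=o(r_n^2)$ into small Rayleigh quotients. But across a generic ball of radius $r$, the density $e^{-W}$ varies by factors of order $e^{c\gamma(r)}$ (e.g.\ $e^{cr^H}$ for fractional Brownian fields). Hypothesis \eqref{eq:gamma-3} only makes this subexponential, never bounded.
\begin{enumerate}
\item Your remedy of choosing $r_n\to\infty$ with $\gamma(r_n)\sqrt{\log\log R_n}=O(1)$ is impossible, because \eqref{eq:gamma-1} forces $\gamma(x)\to\infty$. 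Moreover $e^{O(r_n^p)}/r_n^2\to\infty$, so no subsequence has quotients tending to $0$.
\item Under \eqref{eq:gamma-2} you leave the argument open, and that hypothesis is never used in a working step.
\item The uniform oscillation bound is also off. A union bound over the polynomially many balls at distance $R_n$ costs a factor $\sqrt{\log R_n}$, not $\sqrt{\log\log R_n}$ (compare $a_n$ in the proof of Proposition \ref{prop:volume-gamma}). In any case, a bound that grows with $R_n$ at fixed $r$ gives unbounded quotients.
\item The fixed-radius good-ball idea is salvageable for noncompactness. Infinitely many pairwise disjoint balls $B(x_n,2r)$ with $\sup_{B(x_n,2r)}|W-W(x_n)|\le K$ would give $\lambda_{\rm ess}^W\le 2^{d-1}e^{2K}r^{-2}$ by min--max. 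However, their almost sure existence needs an ergodicity or decorrelation property of the increment field. Stationarity of increments does not supply this by itself, and you do not prove it.
\end{enumerate}

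\textbf{The missing idea.} Work with global volume growth rather than local doubling; this is what Proposition \ref{prop:volume-gamma} actually packages.
\begin{itemize}
\item The function $\rho(x)=|x|$ is locally in $\F^W$ with $|\nabla\rho|=1$. Hence the Brooks-type criterion \cite[Theorem 1]{N98} bounds $\lambda_{\rm ess}^W$ by a constant times $\bigl(\limsup_{R\to\infty}R^{-1}\log\mu^W(B(0,R))\bigr)^2$, provided $\mu^W(\R^d)=\infty$.
\item Its test functions have exponential profiles in $\rho$. This is what converts subexponential growth into $\lambda_{\rm ess}^W=0$ without any local regularity of $e^{-W}$.
\item The probabilistic input is then pointwise. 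K\^ono's lower bound $W(x)\ge-\gamma(x)\sqrt{3\log\log|x|}$ for large $|x|$ gives $\mu^W(B(0,R))\le e^{CR}$ under \eqref{eq:gamma-2}, and $\le e^{CR^p}$ up to polynomial factors under \eqref{eq:gamma-3}.
\item Infinite total mass $\mu^W(\R^d)=\infty$ comes from K\^ono's sequence $W(x_n)\le-\gamma(x_n)\sqrt{\log\log|x_n|}$. Combine it with the Fernique--Borel--Cantelli bound of order $\sqrt{\log n}$ on unit-ball oscillations, which \eqref{eq:gamma-1} makes negligible. This yields $W<0$ on infinitely many disjoint unit balls.
\end{itemize}
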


We will prove Theorem \ref{thm:gauss}
by way of the volume growth upper bound
for the bottom of the essential spectrum in \cite[Theorem 1]{N98} (see also \cite[Theorem 3.2]{S25}).
In order to do so, we reveal the almost sure volume growth behavior of $\mu^W$.
For $x\in {\mathbb R}^d$ and $r>0$, let
\[
B(x,r)=\{y\in {\mathbb R}^d : |y-x|\le r\}
\]
be a closed Euclidean ball with center $x$ and radius $r$.
\begin{prop}\label{prop:volume-gamma}
Let Assumption {\rm \ref{assum:1}}  be fulfilled.
Then the next three assertions hold.
\begin{enumerate}
\item[{\rm (1)}] $\lim_{R\rightarrow\infty}\mu^W(B(0,R))=\infty$, $Q$-a.s.
\item[{\rm (2)}]
If \eqref{eq:gamma-2}   holds,
then
$$
\limsup_{R\rightarrow\infty}\frac{1}{R}\log \left(\mu^W(B(0,R))\right)<\infty, \quad \text{$Q$-a.s.}
$$
\item[{\rm (3)}]
If \eqref{eq:gamma-3} holds for some $p\in (0,1)$, then
\[
\lim_{R\rightarrow\infty}\frac{1}{R}\log  \left(\mu^W(B(0,R))\right)=0, \quad \text{$Q$-a.s.}
\]
\end{enumerate}
\end{prop}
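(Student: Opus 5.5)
The plan is to control $\mu^W(B(0,R))=\int_{B(0,R)}e^{-W(x)}\,\d x$ from below and above through the almost sure behaviour of $\inf_{B(0,R)}W$ and $\sup_{B(0,R)}|W|$. These extremes come from K\^ono-type laws of the iterated logarithm and uniform modulus estimates for stationary-increment Gaussian fields \cite{K75}. Assumption \ref{assum:1}(2)(b),(c) supplies the regular variation and the derivative control needed for those results. Throughout, replace $W$ by $W-W(0)$; this multiplies $\mu^W$ by the finite positive constant $e^{W(0)}$ and changes none of the three assertions. After this normalization, $E[W(x)^2]=\gamma(x)^2$.

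\emph{Assertion (1).} Here I would use a lower LIL-type statement. Along a sequence $x_n$ with $|x_n|\to\infty$, one should have $W(x_n)\le -c\,\gamma(x_n)\sqrt{\log\log|x_n|}$ infinitely often, $Q$-a.s. This follows from K\^ono's LIL, applied to $-W$, which has the same law. Continuity of $W$ then gives $W\le W(x_n)+1$ on a ball $B(x_n,\rho_n)$. The radius $\rho_n$ is chosen from the uniform modulus of continuity of $W$ on $B(0,2|x_n|)$, which by regular variation at $0$ with index $\alpha_i$ can be taken to be of order $|x_n|^{-K}$ for some fixed $K$. Hence
\[
\mu^W(B(0,R_n))\ge c\,|x_n|^{-Kd}\exp\big(c\,\gamma(x_n)\sqrt{\log\log|x_n|}\big),
\]
and condition \eqref{eq:gamma-1} says exactly that the exponent beats $Kd\log|x_n|$. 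So the right-hand side tends to $\infty$. Since $\mu^W(B(0,R))$ is monotone in $R$, this gives (1).

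\emph{Assertions (2) and (3).} The upper bound is
\[
\mu^W(B(0,R))\le c_dR^d\exp\Big(\sup_{|x|\le R}(-W(x))\Big).
\]
So it suffices to show that $Q$-a.s., for large $R$,
\[
\sup_{|x|\le R}|W(x)|\le C\sup_{|x|\le R}\gamma(x)\sqrt{\log\log R}+C\log R .
\]
This is an upper-class LIL uniform over the ball. I would prove it on dyadic shells $R=2^k$ by a Borel--Cantelli argument. The expectation $E\sup_{B(0,2^k)}W$ is bounded via Dudley's entropy integral using $\gamma$; the regular variation of the $\sigma_i$ at $0$ and $\infty$ makes the entropy contribution of order $\gamma(2^k)\sqrt{\log 2^k}$ at worst. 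The Borell--TIS inequality then handles deviations above this mean, with the variance at most $\gamma(2^k)^2$.

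This mean bound is where I expect the main obstacle. A naive chaining gives $\sqrt{\log R}$ rather than $\sqrt{\log\log R}$, so I would instead cite the uniform upper bound in \cite{K75}, which already has the $\sqrt{\log\log}$ form. For the problem at hand, the cruder bound with an extra $\log R$ term suffices, since $\log R=o(R)$. Granting this bound, condition \eqref{eq:gamma-2} together with regular variation at $\infty$ (so $\sup_{|x|\le R}\gamma\asymp\gamma$ at radius $R$) gives
\[
\frac{\log\mu^W(B(0,R))}{R}\le C'+o(1),
\]
which is (2).

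Under \eqref{eq:gamma-3}, the same estimate gives $\log\mu^W(B(0,R))=O(R^p+\log R)=o(R)$, so the $\limsup$ is at most $0$. Assertion (1) shows $\log\mu^W(B(0,R))>0$ eventually, so the $\liminf$ is at least $0$. Together these give the limit $0$, which is (3).
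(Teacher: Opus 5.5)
Your proposal is correct. For (2) and (3) it is essentially the paper's argument. The paper also relies on K\^ono's upper-class result \cite[Theorem 4]{K75}, in the pointwise form $W(x)\ge-\gamma(x)\sqrt{3\log\log|x|}$ for $|x|\ge R_0$. It then integrates $e^{\gamma(x)\sqrt{3\log\log|x|}}$ over $B(0,R)$, which is your bound $c_dR^d\exp(\sup_{B(0,R)}(-W))$ up to harmless factors. The chaining issue you flag is not actually an obstacle here. Since the $\sigma_i$ are regularly varying at infinity with indices $\beta_i<1$, one has $\sup_{|x|\le R}\gamma(x)=O(R^{p})$ for some $p<1$. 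So even a bound with a multiplicative $\sqrt{\log R}$ would give $\log\mu^W(B(0,R))=o(R)$. You also correctly supply the $\liminf\ge 0$ half of (3) via (1), which the paper leaves implicit.

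The real difference is in (1). The paper never needs small-scale regularity. It applies Fernique's theorem to $\sup_{B(0,1)}|W|$ and transfers the Gaussian tail to every unit ball $B(q,1)$ by stationarity of increments. Borel--Cantelli over the $\asymp n^{d-1}$ unit balls covering each shell then gives oscillation $O(\sqrt{\log|x|})$ on unit balls. Hence $W<0$ on an entire unit ball near each K\^ono point $x_l$, so $e^{-W}\ge 1$ on infinitely many unit balls; this only uses $\gamma(x)\sqrt{\log\log|x|}\gg\sqrt{\log|x|}$.

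You instead shrink to balls of radius $|x_n|^{-K}$, on which the oscillation is at most $1$, and pay a volume factor $|x_n|^{-Kd}$ that \eqref{eq:gamma-1} absorbs exactly as stated. The price is that ``the radius can be taken of order $|x_n|^{-K}$'' does not follow from continuity. Because $x_n$ is random, you need an almost sure modulus bound on $B(0,2R)$ valid for all large $R$ simultaneously. That requires the same kind of machinery as the paper: stationary increments, Gaussian concentration at scale $h=R^{-K}$ using $\sup_{|z|\le h}\gamma(z)\lesssim h^{a}$ from regular variation at $0$, a union bound over $\asymp R^d$ unit balls, and Borel--Cantelli along dyadic $R$. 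So your route works but is somewhat heavier. The paper's is more economical because unit-scale control already suffices.
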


\begin{proof}
We first prove (1).
Let
\[
\mathcal B=\{f:B(0,1) \to {\mathbb R}\, \big|\,\text{continuous and $f(0)=0$}\}, \quad \|f\|=\sup_{y\in B(0,1)}|f(y)|.
\]
Then, $(\mathcal B,\|\cdot\|)$ is a separable Banach space.
Hence, Fernique's theorem (\cite[Exercise 6.1 and Corollary 6.1]{L12}) implies
that for some $\alpha>0$,
\[
E\left[\exp\left(\alpha \|W\|^2\right)\right]=E\left[\exp\left(\alpha \sup_{y\in B(0,1)}|W(y)|^2\right)\right]<\infty.
\]
Since $\{W(x)\}_{x\in {\mathbb R}^d}$ is a stationary process by Assumption {\rm \ref{assum:1}},
we see that for any $q\in {\mathbb R}^d$,
$\sup_{y\in B(q,1)}|W(y)-W(q)|$ and $\sup_{y\in B(0,1)}|W(y)|$ are identical in distribution.
Combining this with Chebyshev's inequality,
we have for any $a>0$ and $q\in \R^d$,
\begin{equation}\label{eq:c-ineq}
\begin{split}
P\left(\sup_{y\in B(q,1)}|W(y)-W(q)|\ge a\right)
&=P\left(\sup_{y\in B(0,1)}|W(y)|\ge a\right)\\
&\le \frac{1}{e^{\alpha a^2}}E\left[\exp\left(\alpha \sup_{y\in B(0,1)}|W(y)|^2\right)\right].
\end{split}
\end{equation}
On the other hand,
we can take sequences $\{q_m^{(n)}\}_{m=1}^{M_n}\subset B(0,n+1)\setminus B(0,n)$
for any $n\in {\mathbb N}$ such that $M_n\asymp n^{d-1}$ and
\begin{equation}\label{eq:torus}
B(0,n+1)\setminus B(0,n)\subset \bigcup_{m=1}^{M_n}B(q_m^{(n)},1), \quad n\in {\mathbb N}.
\end{equation}
Hence, \eqref{eq:c-ineq} implies that for any positive sequence $\{a_n\}$
and $n\in {\mathbb N}$,
\[
P\left(\sup_{y\in B(q_m^{(n)},1)}|W(y)-W(q_m^{(n)})|\ge a_n\right)
\le \frac{1}{e^{\alpha a_n^2}}E\left[\exp\left(\alpha \sup_{y\in B(0,1)}|W(y)|^2\right)\right], \quad 1\le m\le M_n,
\]
and so
\begin{equation*}
\begin{split}
\sum_{n=1}^{\infty}\sum_{m=1}^{M_n}P\left(\sup_{y\in B(q_m^{(n)},1)}|W(y)-W(q_m^{(n)})|\ge a_n\right)
&\le E\left[\exp\left(\alpha \sup_{y\in B(0,1)}|W(y)|^2\right)\right]\sum_{n=1}^{\infty}\frac{M_n}{e^{\alpha a_n^2}}\\
&\le c_1\sum_{n=1}^{\infty}\frac{n^{d-1}}{e^{\alpha a_n^2}}.
\end{split}
\end{equation*}
In particular, if we let
\[
a_n=\sqrt{\frac{1}{\alpha}\left(d\log n+(1+\varepsilon)\log\log n\right)} \quad \hbox{ for some } \varepsilon>0,
\]
then
\[
\sum_{n=1}^{\infty}\frac{n^{d-1}}{e^{\alpha a_n^2}}=\sum_{n=1}^{\infty}\frac{1}{n(\log n)^{1+\varepsilon}}<\infty.
\]
Therefore, the Borel-Cantelli lemma implies that $Q$-a.s., there exists $N_0\in {\mathbb N}$ such that
\[
\sup_{y\in B(q_m^{(n)},1)}|W(y)-W(q_m^{(n)})|<\sqrt{\frac{1}{\alpha}\left(d\log n+(1+\varepsilon)\log\log n\right)},
\quad n\ge N_0, \ 1\le m\le M_n.
\]

By \cite[Theorem 9 and Lemma 1(iii)]{K75}, we see that under Assumption \ref{assum:1},
$Q$-a.s., there exists a sequence $\{x_n\}\subset {\mathbb R}^d$ such that
$|x_n|\rightarrow\infty$ as $n\rightarrow\infty$ and
\[
W(x_n)\le -\gamma(x_n)\sqrt{\log \log |x_n|}, \quad n\ge 1.
\]
Moreover, it follows by \eqref{eq:torus} that,
for any $l\ge 1$ with $|x_l|\ge N_0$,
there exist $N_l\in {\mathbb N}$ and $m_l\in \{1,\dots, M_{N_l}\}$ such that
$N_l\ge N_0$, $N_l<|x_l|\le N_l+1$ and $|x_l-q_{m_l}^{(N_l)}|\le 1$.
Hence for any $x\in B(q_{m_l}^{(N_l)},1)$,
\begin{equation*}
\begin{split}
W(x)
&=(W(x)-W(q_{m_l}^{(N_l)}))+(W(q_{m_l}^{(N_l)})-W(x_l))+W(x_l)\\
&\le 2\sup_{y\in B(q_{m_l}^{(N_l)},1)}|W(y)-W(q_{m_l}^{(N_l)})|+W(x_l)\\
&\le 2\sqrt{\frac{1}{\alpha}\left(d\log N_l +(1+\varepsilon)\log\log N_l \right)}-\gamma(x_l)\sqrt{\log\log |x_l|}\\
&\le 2\sqrt{\frac{1}{\alpha}\left(d\log |x_l| +(1+\varepsilon)\log\log |x_l| \right)}-\gamma(x_l)\sqrt{\log\log |x_l|}.
\end{split}
\end{equation*}
Then by \eqref{eq:gamma-1},
there exists $L_0\in {\mathbb N}$, such that for all $l\ge L_0$,
the last expression above is negative.
Therefore, if we write $m$ for the Lebesgue measure on ${\mathbb R}^d$,
then $Q$-a.s.,
\begin{equation*}
\begin{split}
\mu^W(B(0,R))=\int_{|x|\le R}e^{-W(x)}\,{\rm d}x
&\ge \sum_{l=L_0}^{\infty}\int_{|x|\le R, \, |x-q_{m_l}^{(N_l)}|\le 1}e^{-W(x)}\,{\rm d}x\\
&\ge \sum_{l=L_0}^{\infty} m(B(0,R)\cap B(q_{m_l}^{(N_l)},1))
\rightarrow\infty, \quad R\rightarrow\infty.
\end{split}
\end{equation*}
The proof of (1) is complete.

We turn to the proofs of (2) and (3).
By \cite[Theorem 4 and Lemma 1(iii)]{K75},
$Q$-a.s., there exists $R_0\ge 0$ such that for any $x\in {\mathbb R}^d$ with $|x|\ge R_0$,
\[
W(x)\ge -\gamma(x)\sqrt{3 \log\log |x|}.
\]
Hence we have $Q$-a.s., for all $R\ge R_0$,
\[
\int_{R_0<|x|\le R}e^{-W(x)}\,{\rm d}x \le \int_{R_0<|x|\le R}e^{\gamma(x)\sqrt{3\log\log |x|}}\,{\rm d}x.
\]

We first assume that \eqref{eq:gamma-2} holds.
Then there exist $R_1\ge R_0$ and $C_1>0$ such that for all $R\ge R_1$,
\begin{equation*}
\begin{split}
\int_{R_0<|x|\le R}e^{\gamma(x)\sqrt{3\log\log |x|}}\,{\rm d}x
&=\int_{R_0<|x|\le R_1}e^{\gamma(x)\sqrt{3\log\log |x|}}\,{\rm d}x
+\int_{R_1<|x|\le R}e^{\gamma(x)\sqrt{3\log\log |x|}}\,{\rm d}x\\
&\le \int_{R_0<|x|\le R_1}e^{\gamma(x)\sqrt{3\log\log |x|}}\,{\rm d}x
+\int_{R_1<|x|\le R}
e^{C_1|x|}\,{\rm d}x \\
&\asymp e^{C_1 R}R^{d-1}.
\end{split}
\end{equation*}
Since this implies that
\[
\limsup_{R\rightarrow\infty}\frac{1}{R}\log  \left(\mu^W(B(0,R))\right)\le C_1, \quad \text{$Q$-a.s.,}
\]
the proof of (2) is complete.

We next assume that \eqref{eq:gamma-3} holds.
Then there exist $R_2\ge R_0$ and $C_2>0$ such that for all $R\ge R_2$,
\begin{equation*}
\begin{split}
\int_{R_0<|x|\le R}e^{\gamma(x)\sqrt{3\log\log |x|}}\,{\rm d}x
&=\int_{R_0<|x|\le R_2}e^{\gamma(x)\sqrt{3\log\log |x|}}\,{\rm d}x
+\int_{R_2<|x|\le R}e^{\gamma(x)\sqrt{3\log\log |x|}}\,{\rm d}x\\
&\le \int_{R_0<|x|\le R_2}e^{\gamma(x)\sqrt{3\log\log |x|}}\,{\rm d}x
+\int_{R_2<|x|\le R}
e^{C_2|x|^p}
\,{\rm d}x\\
&\lesssim e^{C_2 R^p}R^{d-p}.
\end{split}
\end{equation*}
Since $p\in (0,1)$, we have
\[
\lim_{R\rightarrow\infty}\frac{1}{R}\log  \left(\mu^W(B(0,R))\right)=0, \quad \text{$Q$-a.s.}
\]
and so the proof of (3) is complete.
\end{proof}

\begin{proof}[Proof of Theorem {\rm \ref{thm:gauss}}]
The regularity of $({\cal E}^w,{\cal F}^w)$ implies that
the coordinate functions $\varphi_i(x)=x_i \ (1\le i \le d)$ are locally in ${\cal F}^W$
(see \cite[p.~130]{FOT11} for the notion of ``locally in ${\cal F}$'').
Then by the similar argument as in the proof of \cite[Theorem 1.4.2 (ii)]{FOT11},
we see that the function $\rho(x)=|x|=(\sum_{k=1}^dx_k^2)^{1/2}$ is also locally in ${\cal F}^W$
and $|\nabla \rho|=1$, $\mu^W$-a.e.\  on ${\mathbb R}^d$.
Hence Proposition \ref{prop:volume-gamma} and \cite[Theorem 1]{N98} (see also \cite[Theorem 3.2]{S25}) yield
$\lambda_{{\rm ess}}^w<\infty$, $Q$-a.s.\ and so
$\{P_t^W\}_{t>0}$ is
noncompact, $Q$-a.s.
In particular, if \eqref{eq:gamma-3} holds, then $\lambda_{{\rm ess}}^w=0$, $Q$-a.s.
The proof is complete.
\end{proof}

\begin{exam}\rm
Let $H\in (0,1)$ be a constant.
Assume that the covariance structure of the Gaussian process $(\{W(x)\}_{x\in \R^d}, Q)$ is given by
\[
E[W(x)W(y)]=\frac{1}{2}\left(|x|^{2H}+|y|^{2H}-|x-y|^{2H}\right), \quad x,y\in \R^d,
\]
that is, $(\{W(x)\}_{x\in \R^d}, Q)$ is the fractional Brownian field with the Hurst parameter $H$.
If $H=1/2$, then $(\{W(x)\}_{x\in \R^d}, Q)$ is the so-called L\'evy's Brownian motion with multidimensional parameter.
For any $H\in (0,1)$,
since Assumption \ref{assum:1} is fulfilled with $l=1$ and $\gamma(x)=\sigma_1(|x|)=|x|^H $ for all $x\in {\mathbb R}^d$,
Theorem {\rm \ref{thm:gauss}} implies that
the Markovian semigroup $\{P_t^W\}_{t>0}$ is noncompact
with $\lambda_{{\rm ess}}^W=0$.
We note that for any $d\ge 1$ and $H\in (0,1)$,
$\{P_t^W\}_{t>0}$ is recurrent (see \cite[Theorem 1]{Tanaka93} and \cite[Theorem 4.1]{KTT15}).
\end{exam}

\begin{rem}\rm
Let $(\{B(t)\}_{t\ge 0},Q)$ be the fractional Brownian motion with the Hurst parameter $H\in (0,1)$.
Then $(\{B(t)\}_{t\ge 0},Q)$ is a Gaussian process with the covariance
\[
E[B(x)B(y)]=\frac{1}{2}\left(|x|^{2H}+|y|^{2H}-|x-y|^{2H}\right), \quad x,y\ge 0.
\]
If we take $W(x)=B(|x|)$ for $x\in \R^d$,
then Theorem \ref{thm:gauss} is not directly applicable to the semigroup $\{P_t^W\}_{t>0}$;
however, the argument in the proof of Proposition \ref{prop:volume-gamma} still works
because $\{B(t)\}_{t \ge 0}$ satisfies the law of the iterated logarithm (\cite[Theorem 1.1]{O72}).
In particular, we have $\lambda_{{\rm ess}}^W=0$, $Q$-a.s.
We note that if $H=1/2$, then $\{B(t)\}_{t\ge 0}$ is the  Brownian motion on $\R$ and
$\{P_t^W\}_{t>0}$ is recurrent for any $d\ge 1$ (\cite[Theorem 2.1]{FNT87}).
\end{rem}

\section{Semi-selfsimilar L\'evy environment}\label{e:section2.2}

In this section, we discuss the noncompactness of Markovian semigroups
generated by Brox diffusion processes in semi-selfsimilar environment.

\subsection{Formulation}\label{subsect:formulation}
Following \cite{KK20,KTT17}, we first define a random function on ${\mathbb R}^d$
associated with a semi-selfsimilar L\'evy process on ${\mathbb R}$.
Let ${\cal W}$ be the set of
functions $w: \R \to \R$ such that
$w(0)=0$, $w$ is right continuous with left limits on $[0,\infty)$, and left continuous with right limits on $(-\infty,0]$.
Let ${\cal P}$ be the set of Borel cylindrical sets in ${\cal W}$:
\[
{\cal P}=\left\{\left\{w\in {\cal W} : (w(x_1),\dots, w(x_n))\in B\right\}:
(x_1,\dots,x_n)\in {\mathbb R}^n, B\in {\cal B}({\mathbb R}^n), n\in {\mathbb N}\right\},
\]
which forms a $\pi$-system.
The $\sigma$-field generated by ${\cal P}$, say ${\cal B}({\cal W})$, coincides with the Borel $\sigma$-field generated by the uniform convergence topology on compact sets.

For $x\ge 0$, let $V_+(x)$ be a random variable on $({\cal W},{\cal B}({\cal W}))$ defined by
\[
V_+(x)(w)=w(x), \quad w\in {\cal W}.
\]
Suppose that $Q$ is a probability measure on $({\cal W},{\cal B}({\cal W}))$ so that
$(\{V_+(x)\}_{x\ge 0},Q)$ is a real-valued L\'evy process on ${\mathbb R}$ satisfying
\begin{enumerate}
\item $Q(V_+(0)=0)=1$ and $Q(V_+(x)=0)<1$ for any $x>0$;
\item $\{V_+(x)\}_{x\ge 0}$ is right continuous with left limits on $[0,\infty)$, $Q$-a.s.;
\item There exist constants $c>0$ with $c\ne 1$ and $\alpha\in (0,2)$ such that
$\{V_+(cx)\}_{x\ge 0}\overset{d}{=}\{c^{1/\alpha}V_+(x)\}_{x\ge 0}$ under the law $Q$.
\end{enumerate}
Here, for two  stochastic processes  $\{Y_1(t)\}_{t\in T}$ and $\{Y_2(t)\}_{t\in T}$
with index set $T$,
$\{Y_1(t)\}_{t\in T}\overset{d}{=}\{Y_2(t)\}_{t\in T}$ means that all of their finite dimensional distributions coincide.
In other words, $(\{V_+(x)\}_{x\ge 0},Q)$ is a nontrivial semi-selfsimilar L\'evy process on ${\mathbb R}$
with index $\alpha\in (0,2]$, and starting from $x=0$
(see \cite{MS99} and \cite[Section 13]{Sa13} for details).
We define
\[
r_0=\inf\left\{c>1 : \text{$\{V_+(cx)\}_{x\ge 0}\overset{d}{=}\{c^{1/\alpha}V_+(x)\}_{x\ge 0}$ under the law $Q$}\right\}.
\]
Then by \cite[Theorem 13.11]{Sa13}, we know that
\begin{itemize}
\item If $r_0=1$, then for any $r>0$, $\{V_+(rx)\}_{x\ge 0}\overset{d}{=}\{r^{1/\alpha}V_+(x)\}_{x\ge 0}$ under the law $Q$.
\item If $r_0>1$, then for any $n\in {\mathbb Z}$, $\{V_+(r_0^n x)\}_{x\ge 0}\overset{d}{=}\{r_0^{n/\alpha}V_+(x)\}_{x\ge 0}$ under the law $Q$.
\end{itemize}
When $r_0=1$, we call $(\{V_+(x)\}_{x\ge 0},Q)$ an $\alpha$-stable L\'evy process;
otherwise, we call $(\{V_+(x)\}_{x\ge 0},Q)$ an $(r_0,\alpha)$-semi-selfsimilar L\'evy process.

For $x\ge 0$, let $V_-(x)$ be a random variable on $({\cal W},{\cal B}({\cal W}))$ defined by
\[
V_-(x)(w)=w(-x), \quad w\in {\cal W}.
\]
Suppose also that $(\{V_-(x)\}_{x\ge 0},Q)$ is an $(r_0,\alpha)$-semi-selfsimilar L\'evy process
independently of $\{V_+(x)\}_{x\ge 0}$.
For $x\in \R$, we further define
a
random variable $V(x)$ on $({\cal W},{\cal B}({\cal W}))$ by
\[
V(x)(w)=\begin{dcases}
V_+(x)(w), & x\ge 0, \\
V_-(-x)(w), & x<0.
\end{dcases}
\]
We then have
\begin{enumerate}
\item[(i)']$Q(V(0)=0)=1$ and $Q(V(x)=0)<1$ for any $x\in {\mathbb R}\setminus \{0\}$;
\item[(ii)'] $V$ is right continuous with left limits on $[0,\infty)$, and left continuous with right limits on $(-\infty,0]$, $Q$-a.s.;
\item[(iii)'] If $r_0=1$, then for any $r>0$, $\{V(rx)\}_{x\in {\mathbb R}}\overset{d}{=}\{r^{1/\alpha}V(x)\}_{x\in {\mathbb R}}$ under the law $Q$.
On the other hand, if $r_0>1$,
then for any $n\in {\mathbb Z}$, $\{V(r_0^n x)\}_{x\in {\mathbb R}}\overset{d}{=}\{r_0^{n/\alpha}V(x)\}_{x\in {\mathbb R}}$ under the law $Q$.
\end{enumerate}
For $x=(x_1,\dots, x_d)\in {\mathbb R}^d$, we define
a
random variable $W(x)$ by
\[
W(x)=\sum_{k=1}^d V(x_k).
\]
Then $\{W(x)\}_{x\in {\mathbb R}^d}$ forms a random function in ${\mathbb R}^d$
associated with $\{V(x)\}_{x\in {\mathbb R}}$.

As in the Gaussian environment model,
for each $Q$-a.s.\ realization of $\{W(x)\}_{x\in {\mathbb R}^d}$,
let $\mu^W({\rm d}x)=e^{-W(x)}\,{\rm d}x$ be a positive Radon measure on ${\mathbb R}^d$ with full support.
Let $\{P_t^W\}_{t>0}$ be the Markovian semigroup on $L^2(\R^d;\mu^W)$
associated with $({\cal E}^W,{\cal F}^W)$,
and let $\lambda_{{\rm ess}}^W$ be the bottom of the essential spectrum for $-L^W$.
We then have

\begin{thm}\label{thm:semi}
Suppose that $\alpha\in (1,2)$ and $Q(V(1)<0)>0$.
Then $\lambda_{{\rm ess}}^W=0$, $Q$-a.s.
In particular, $\{P_t^W\}_{t>0}$ is
noncompact, $Q$-a.s.
\end{thm}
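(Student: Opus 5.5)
The plan follows the Gaussian case. We use the volume growth criterion of \cite[Theorem 1]{N98} (see also \cite[Theorem 3.2]{S25}) with $\rho(x)=|x|$, which is locally in ${\cal F}^W$ with $|\nabla\rho|=1$ as in the proof of Theorem \ref{thm:gauss}. It then suffices to prove, $Q$-a.s.,
\[
\lim_{R\to\infty}\mu^W(B(0,R))=\infty \quad\text{and}\quad \lim_{R\to\infty}\frac1R\log\mu^W(B(0,R))=0 .
\]
Since $W(x)=\sum_k V(x_k)$, the measure $\mu^W$ is a product measure: $\mu^W(\d x)=\prod_{k=1}^d e^{-V(x_k)}\,\d x_k$. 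With $A(R)=\int_{-R}^R e^{-V(s)}\,\d s$ we get
\[
A(R/\sqrt d)^d\le \mu^W(B(0,R))\le A(R)^d .
\]
So both properties reduce to one-dimensional statements about $A(R)$.

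\textbf{Upper bound.} For $\alpha\in(1,2)$ we invoke a uniform upper bound of sample paths for semi-selfsimilar L\'evy processes: $Q$-a.s., $\sup_{|s|\le R}|V(s)|\le R^{p}$ for all large $R$, for some $p\in(1/\alpha,1)$. This is a Chung/Khintchine-type upper law; one proof applies Borel--Cantelli along $R=r_0^n$, using semi-selfsimilarity to write $Q(\sup_{s\le r_0^n}|V_+(s)|>r_0^{np})=Q(\sup_{s\le1}|V_+(s)|>r_0^{n(p-1/\alpha)})$. The tail of the supremum is $O(t^{-\alpha})$, so these probabilities are summable. We then interpolate between consecutive powers of $r_0$. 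This gives $A(R)\le 2Re^{R^p}$, and hence $\limsup_{R\to\infty} R^{-1}\log\mu^W(B(0,R))\le 0$. Finite first moment, guaranteed by $\alpha>1$, is where this step uses $\alpha>1$.

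\textbf{Divergence $A(R)\to\infty$.} It is enough to show that $\int_0^\infty e^{-V_+(s)}\,\d s=\infty$ a.s.; this also gives $\liminf_{R\to\infty} R^{-1}\log\mu^W(B(0,R))\ge0$. One route is to show that $V_+(s)\le 0$ on sets of infinite Lebesgue measure. For this we use the ergodic theory of the scaling transform $T:w\mapsto r_0^{-1/\alpha}w(r_0\,\cdot)$, which preserves $Q$ and is strongly mixing, since the tail-type dependence of L\'evy paths decays under rescaling. Consider the event
\[
F=\{V_+(s)\le 0 \text{ for all } s\in[1,2]\}.
\]
Its probability is positive because $Q(V(1)<0)>0$ (a support argument for L\'evy processes). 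By the ergodic theorem, $T^nw\in F$ for infinitely many $n$ a.s. On such $n$ we have $V_+\le0$ on $[r_0^n,2r_0^n]$, which contributes at least $r_0^n$ to $A$, so $A(R)\to\infty$. If $r_0=1$ we apply the same argument with any fixed $c>1$.

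\textbf{Main obstacle.} I expect the main obstacle to be the mixing/ergodicity of the scaling transform together with $Q(F)>0$. Justifying mixing needs an approximation of $F$ by cylinder events depending on increments over disjoint scale windows. Independence of increments then gives asymptotic independence, because the contribution of $V_+(r_0^{-n}\cdot)$ on $[0,r_0^{-n}]$, rescaled, vanishes. Showing $Q(F)>0$ uses that $V(1)<0$ with positive probability, together with the Markov property to keep the path below $0$ on $[1,2]$. Here some care is needed, for instance when the process has only positive jumps and a negative drift.
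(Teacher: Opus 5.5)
Your proposal is correct and follows essentially the paper's architecture:
\begin{itemize}
\item the volume-growth criterion of \cite[Theorem 1]{N98} with $\rho(x)=|x|$;
\item a Borel--Cantelli bound along $r_0^n$ for the upper estimate, fed into the same inequality $\mu^W(B(0,R))\le A(R)^d$ as in \eqref{eq:volume-upper-1}. Your bound $R^p$ with $p\in(1/\alpha,1)$ is a cruder but sufficient version of Lemma \ref{lem:upper}.
\item the Birkhoff ergodic theorem for the strongly mixing scaling map to get divergence. This is Lemma \ref{lem:scale-ergodic}, whose proof via characteristic functions on cylinder sets is what you sketch.
\end{itemize}

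The one genuine difference is the lower bound. The paper works in polar coordinates with the sphere functional $M(m)=\sup_{u\in[1,r],\,\sigma\in S^{d-1}}\sum_k r^{-m/\alpha}V(r^m u\sigma_k)$. It therefore needs the $d$-dimensional positivity statement of Lemma \ref{lem:positive}, which is somewhat awkward because some coordinates $u\sigma_k$ approach $0$. You instead use the product structure $\mu^W=\bigotimes_{k=1}^d e^{-V(x_k)}\,\d x_k$ and the inscribed cube $[-R/\sqrt d,R/\sqrt d]^d\subset B(0,R)$. This reduces everything to a one-dimensional event, which is cleaner and only requires $Q(\sup_{s\in[1,2]}V_+(s)\le 0)>0$.

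That requirement is exactly the $d=1$ case of Lemma \ref{lem:positive}. If you use $[1,r]$ instead of $[1,2]$ (with $r=r_0$, or any $r>1$ when $r_0=1$), you can cite \cite[Proposition 2.1]{KTT17} directly rather than rely on an unproved support argument. The spectrally one-sided case you worry about causes no trouble: for $\alpha>1$ a strictly semi-stable process has unbounded variation.

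Two small corrections:
\begin{itemize}
\item The condition $\alpha>1$ is not used through a finite first moment. Any moment of order less than $\alpha$, or your $O(t^{-\alpha})$ tail, already gives geometric summability. It enters only through the existence of $p\in(1/\alpha,1)$, that is, the subexponential growth of $e^{CR^p}$.
\item Interpolating between $r_0^n$ and $r_0^{n+1}$ gives $\sup_{|s|\le R}|V(s)|\le r_0^{p}R^{p}$, not $R^p$. The extra constant is harmless since $p<1$.
\end{itemize}
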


Theorem \ref{thm:semi} shows that
the random drift perturbation does not change the value of the bottom of the essential spectrum.
We note that Theorem \ref{thm:semi} is applicable to the operators $-L^W$ with any $d\ge 1$.
On the other hand, we do not know if Theorem \ref{thm:semi} is valid or invalid for $\alpha\in (0,1]$.
We will show later that if $d=1$ and $\{V_+(x)\}_{x \ge 0}$ is a symmetric $\alpha$-stable process with any $\alpha\in (0,2)$,
then the associated semigroup is noncompact (see Proposition \ref{prop:symm-stable} below).

\subsection{Proof of Theorem \ref{thm:semi}}
In the same way as the proof of Theorem \ref{thm:gauss},
we establish Theorem \ref{thm:semi} by investigating the almost sure volume growth behavior of $\mu^W$.
In order to do so, we reveal sample path properties of semi-selfsimilar processes.

We first discuss the ergodic property of the scaling transform.
For $r>0$, we define the map $T_r:{\cal W}\to {\cal W}$ by
\[
T_rw(x)=r^{-1/\alpha}w(rx), \quad x\in {\mathbb R}, \ w\in {\cal W}.
\]
If $r_0=1$, then for any $r>0$,
$T_r$ is a measure preserving transform on the probability space $({\cal W},{\cal B}({\cal W}),Q)$.
On the other hand, if $r_0>1$, then $T_{r_0}$ satisfies the same property.
For $n\ge 2$, we define $T_{r}^n$ as the $n$-th iteration of $T_r$.
The next lemma is mentioned in \cite[p.~244]{FR05},
but we present its proof for the readers' convenience.
\begin{lem}\label{lem:scale-ergodic}
Let $r>1$ when $r_0=1$, and let
$r=r_0$ when $r_0>1$.
Then for any $A,B\in {\cal B}({\cal W})$,
\begin{equation}\label{eq:mixing}
\lim_{n\rightarrow\infty}Q((T_{r}^{-n}A)\cap B)=Q(A)Q(B).
\end{equation}
Namely, $\{T_{r}^n\}_{n\in {\mathbb N}}$ is strongly mixing and so ergodic.
\end{lem}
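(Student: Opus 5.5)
The strategy is the standard one for mixing under scaling: reduce \eqref{eq:mixing} to cylinder sets by a $\pi$-$\lambda$/approximation argument, and then handle cylinder sets using the independence of the increments of $V_+$ and $V_-$.

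\textbf{Step 1: reduction to cylinder sets.} Both sides of \eqref{eq:mixing} are continuous under symmetric-difference approximation, uniformly in $n$. Indeed, $T_r$ preserves $Q$, so $Q(T_r^{-n}A\,\triangle\, T_r^{-n}A')=Q(A\triangle A')$. Since ${\cal B}({\cal W})=\sigma({\cal P})$, every $A\in{\cal B}({\cal W})$ can be approximated in $Q$-measure by finite unions of sets from the algebra generated by ${\cal P}$. Hence it suffices to prove \eqref{eq:mixing} for $A,B$ in that algebra. Such a set depends only on $(w(x_1),\dots,w(x_m))$ for finitely many points. We may therefore take
\[
A=\{(w(x_1),\dots,w(x_m))\in A_0\},\qquad B=\{(w(y_1),\dots,w(y_k))\in B_0\},
\]
with all $|x_i|,|y_j|\le K$.

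\textbf{Step 2: independence plus a small leftover.} We have $T_r^{-n}A=\{(r^{-n/\alpha}w(r^nx_i))_i\in A_0\}$, and $r^n\to\infty$. We split the value $w(r^n x_i)$ according to the sign of $x_i$. For $x_i>0$ and $n$ large we have $r^nx_i>K$, so
\[
V_+(r^nx_i)=\bigl(V_+(r^nx_i)-V_+(K)\bigr)+V_+(K).
\]
The increment part is independent of $\sigma(V_+(s):s\le K)$. The same holds for $V_-$, and $V_+,V_-$ are independent of each other. Consequently the vector
\[
\xi_n=\bigl(r^{-n/\alpha}(V_\pm(r^n|x_i|)-V_\pm(K))\bigr)_i
\]
is independent of the $\sigma$-field generated by $(w(y_j))_j$. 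The leftover is $\eta_n=(r^{-n/\alpha}V_\pm(K))_i$, which tends to $0$ in probability.

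\textbf{Step 3: the key estimate.} We need $Q(\xi_n+\eta_n\in A_0,\,B)\to Q(A)Q(B)$. The difficulty is that $A_0$ is an arbitrary Borel set, so a small perturbation $\eta_n$ does not pass through the indicator directly. I would handle this in two moves.

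First, by Step 1, and because the law of $(w(x_i))_i$ under $Q$ is fixed, we can approximate $1_{A_0}$ in $L^1$ of that law by bounded continuous functions $f$. The map $T_r^n$ is measure preserving, so the error from this replacement is uniform in $n$. For continuous $f$, we have
\[
E[f(\xi_n+\eta_n)1_B]-E[f(\xi_n)1_B]\to0,
\]
by uniform continuity on compacts, tightness of $\xi_n$ (its law is fixed up to the vanishing shift), and $\eta_n\to0$ in probability.

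Second, independence gives $E[f(\xi_n)1_B]=E[f(\xi_n)]\,Q(B)$. Moreover $E[f(\xi_n)]-E[f(\xi_n+\eta_n)]\to0$, and
\[
E[f(\xi_n+\eta_n)]=E[f((w(x_i))_i)]
\]
by invariance of $Q$ under $T_r$. Combining these yields \eqref{eq:mixing}.

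The main obstacle is this approximation step. One must make sure that replacing $1_{A_0}$ by a continuous $f$ is controlled uniformly in $n$. This is exactly where the measure preservation of $T_r^n$ (from the semi-selfsimilarity (iii)') is used, applied to the distribution of the perturbed vector $\xi_n+\eta_n$.

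\textbf{Step 4: ergodicity.} Ergodicity follows from mixing in the standard way. If $T_r^{-1}A=A$, take $B=A$ in \eqref{eq:mixing} to get $Q(A)=Q(A)^2$, so $Q(A)\in\{0,1\}$.
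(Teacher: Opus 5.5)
Your proof is correct. It runs on the same mechanism as the paper's: reduce to finite-dimensional cylinder sets, then write each far-out value $w(r^n x_i)$ as an increment beyond the near-field window plus a remainder. The increment is independent of the coordinates defining $B$, and the remainder is killed by the factor $r^{-n/\alpha}$.

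The execution differs. The paper computes the joint characteristic function of $\bigl((T_r^nV(s_j))_j,(V(t_k))_k\bigr)$, splitting at $t_m$ and $r^n s_1$. It uses stationary increments, scaling and stochastic continuity to identify the limit of ${\rm (I)}_n$, and dominated convergence for ${\rm (II)}_n$. This gives weak convergence of the joint law to the product law, after which the paper simply writes ``We thus get'' the mixing identity.

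That last passage is not automatic for arbitrary Borel bases $A_0,B_0$, and your Step 3 is exactly the argument that closes it. Because $T_r$ preserves $Q$, the marginal laws are fixed, so replacing indicators by bounded continuous functions costs an $L^1$ error that is uniform in $n$. In the paper's setting this replacement is needed on both the $A$ and the $B$ side.

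Your route uses less: only independence of increments, invariance of $Q$ under $T_r$, tightness, and $r^{-n/\alpha}V_\pm(K)\to 0$. You never need to identify the law of $\xi_n$ or invoke a continuity theorem, and no approximation is needed on the $B$ side because the independence there is exact. The paper's route instead gives an explicit, purely computational factorization of the limiting characteristic function, following It\^o's method.

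Two minor points. Coordinates with $x_i=0$ are identically zero and need no treatment. The algebra generated by ${\cal P}$ is ${\cal P}$ itself, so ``finite unions'' in Step 1 is redundant.
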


\begin{proof}
According to \cite[p.~41, Theorem 1.17 (iii)]{W82}, it suffices to show \eqref{eq:mixing} for any $A,B\in {\cal P}$.
Following \cite{I44}, we prove this by way of the characteristic functions.

We write $T=T_r$ for simplicity.
Recall that, according to Subsection \ref{subsect:formulation} (iii)', for all $n\ge1$, $\{T^nV(x)\}_{x\in \R}$ and $\{V(x)\}_{x\in \R}$ have the same distribution.
Let $\{s_j\}_{j=1}^l$ and $\{t_k\}_{k=1}^m$ be any positive increasing sequences.
Take $n\ge 1$ so large that $r^ns_1>t_m$.
Then, for any real sequences $\{\xi_j\}_{j=1}^l$ and $\{\eta_k\}_{k=1}^m$,
we have
\begin{equation*}
\begin{split}
&\prod_{j=1}^l \exp\left(i\xi_j (T^nV(s_j))\right)\prod_{k=1}^m \exp\left(i \eta_k  V(t_k)\right)\\
&=\prod_{j=1}^l \exp\left(i\xi_j (r^{-n/\alpha}V(r^ns_j))\right)\prod_{k=1}^m \exp\left(i \eta_k  V(t_k)\right)\\
&=\prod_{j=2}^l \exp\left(i\left(\sum_{p=j}^l \xi_p\right) r^{-n/\alpha}(V(r^ns_j)-V(r^ns_{j-1}))\right)\\
&\quad \times \exp\left(i\left(\sum_{p=1}^l \xi_p\right) r^{-n/\alpha}(V(r^ns_1)-V(t_m))\right)\\
&\quad\times \prod_{k=1}^{m-1} \exp\left(i \eta_k  V(t_k)\right)
\times \exp\left(i \left(r^{-n/\alpha}\sum_{p=1}^l \xi_p +\eta_m\right)  V(t_m)\right).
\end{split}
\end{equation*}
Taking the expectations
above, we obtain
\begin{equation}\label{eq:ergodic-1}
\begin{split}
&E\left[\prod_{j=1}^l \exp\left(i\xi_j (T^nV(s_j))\right)\prod_{k=1}^m \exp\left(i \eta_k  V(t_k)\right)\right]\\
&=E\Biggl[\prod_{j=2}^l \exp\left(i\left(\sum_{p=j}^l \xi_p\right) r^{-n/\alpha}(V(r^n s_j)-V(r^n s_{j-1}))\right)\\
&\quad\quad\quad\times \exp\left(i\left(\sum_{p=1}^l \xi_p\right) r^{-n/\alpha}(V(r^n s_1)-V(t_m))\right)\Biggr]\\
&\quad\times E\left[\prod_{k=1}^{m-1} \exp\left(i \eta_k  V(t_k)\right) \exp\left(i \left(r^{-n/\alpha}\sum_{p=1}^l \xi_p +\eta_m\right)  V(t_m)\right)\right]\\
&={\rm (I)}_n\times {\rm (II)}_n,
\end{split}
\end{equation}
where the first equality above follows by the independent increments property of $\{V(x)\}_{x\in {\mathbb R}}$.

By the dominated convergence theorem, we have
\begin{equation}\label{eq:ergodic-2}
\lim_{n\rightarrow\infty}{\rm (II)}_n=E\left[\prod_{k=1}^m \exp\left(i \eta_k  V(t_k)\right)\right].
\end{equation}
On the other hand, by the stationary increment property and the scaling property of $\{V(x)\}_{x\in {\mathbb R}}$,
we also have
\begin{equation*}
\begin{split}
{\rm (I)}_n
&=\prod_{j=2}^l E\left[\exp\left(i\left(\sum_{p=j}^l \xi_p\right) r^{-n/\alpha}V(r^n(s_j-s_{j-1}))\right)\right]\\
&\quad\times E\left[\exp\left(i\left(\sum_{p=1}^l \xi_p\right) r^{-n/\alpha}V(r^n(s_1-r^{-n}t_m))\right)\right]\\
&=\prod_{j=2}^l E\left[\exp\left(i\left(\sum_{p=j}^l \xi_p\right) V(s_j-s_{j-1})\right)\right]
E\left[\exp\left(i\left(\sum_{p=1}^l \xi_p\right) V(s_1-r^{-n}t_m)\right)\right].
\end{split}
\end{equation*}
Since $\{V(x)\}_{x\in {\mathbb R}}$ is stochastically continuous,
we obtain $V(s_1-r^{-n}t_m)\rightarrow V(s_1) \ (n\rightarrow\infty)$ in probability,
and so
\[
\lim_{n\rightarrow\infty}E\left[\exp\left(i\left(\sum_{p=1}^l \xi_p\right) V(s_1-r^{-n}t_m)\right)\right]
=E\left[\exp\left(i\left(\sum_{p=1}^l \xi_p\right) V(s_1)\right)\right].
\]
We thus get
\begin{equation}\label{eq:ergodic-3}
\begin{split}
\lim_{n\rightarrow\infty}{\rm (I)}_n
&=\prod_{j=2}^l E\left[\exp\left(i\left(\sum_{p=j}^l \xi_p\right) V(s_j-s_{j-1})\right)\right]E\left[\exp\left(i\left(\sum_{p=1}^l \xi_p\right) V(s_1)\right)\right]\\
&=E\left[\prod_{j=1}^l \exp\left(i\xi_j V(s_j)\right)\right],
\end{split}
\end{equation}
where the second equality above follows by  the independent and stationary increments property of $\{V(x)\}_{x\in {\mathbb R}}$ again.

By \eqref{eq:ergodic-1}, \eqref{eq:ergodic-2} and \eqref{eq:ergodic-3}, we have
\begin{equation}\label{eq:ch-mixing}
\begin{split}
&\lim_{n\rightarrow\infty}
E\left[\prod_{j=1}^l \exp\left(i\xi_j (r^{-n/\alpha}V(r^ns_j))\right)\prod_{k=1}^m \exp\left(i \eta_k  V(t_k)\right)\right]\\
&=E\left[\prod_{j=1}^l \exp\left(i\xi_j V(s_j)\right)\right]E\left[\prod_{k=1}^m \exp\left(i \eta_k  V(t_k)\right)\right].
\end{split}
\end{equation}
Note that this equality is valid
even if $\{s_j\}_{j=1}^l$ and $\{t_k\}_{k=1}^m$ are real-valued increasing sequences.
In fact, we can follow the argument for the proof of \eqref{eq:ch-mixing}
by separating the positive and negative parts of $\{s_j\}_{j=1}^l$ and $\{t_k\}_{k=1}^m$, respectively.
We thus get \eqref{eq:mixing}.
\end{proof}

Let $S^{d-1}$ be the surface of the unit ball in $\R^d$ centered at the origin.
The next lemma can be proved by the argument of \cite[Proposition 2.1]{KTT17}
applied to the process $\{-V(x)\}_{x\in {\mathbb R}}$.

\begin{lem}\label{lem:positive}
Suppose that $Q(V(1)<0)>0$.
Then for any $c>0$,
\[
Q\left(\sup_{u\in [1,r_0], \, \sigma\in S^{d-1}}\left(\sum_{k=1}^d V (u\sigma_k)\right)\le -c\right)>0.
\]
\end{lem}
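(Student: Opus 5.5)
The plan is to reduce the statement to a one-dimensional small-ball-type estimate for $V$ on a bounded interval, using independence of the two halves $V_+$ and $V_-$ together with the independent increments of each half. Note that for $u\in[1,r_0]$ and $\sigma\in S^{d-1}$, each coordinate $u\sigma_k$ ranges over $[-r_0,r_0]$. Moreover, since $\sum_k\sigma_k^2=1$, at least one coordinate satisfies $|u\sigma_k|\ge u/\sqrt d\ge 1/\sqrt d$. It therefore suffices to produce, with positive probability, a sample path with two properties. First, $V(x)\le \epsilon$ for all $|x|\le 1/\sqrt d$, where $\epsilon>0$ is small. Second, $V(x)\le -c-d\epsilon$ for all $1/\sqrt d\le |x|\le r_0$. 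On this event, every sum $\sum_k V(u\sigma_k)$ contains at least one term $\le -c-d\epsilon$ and at most $d-1$ terms $\le\epsilon$ (each remaining term is either $\le\epsilon$ or also $\le -c-d\epsilon<\epsilon$). Hence the sum is $\le -c-\epsilon\le -c$.

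Because $V_+$ and $V_-$ are independent and identically distributed, the event factorizes, and it suffices to prove that for a L\'evy process $V_+$ with $Q(V_+(1)<0)>0$, any $a\in(0,1)$, $b>a$, $\epsilon>0$ and $M>0$,
\[
Q\Bigl(\sup_{0\le x\le a}V_+(x)\le\epsilon,\ \sup_{a\le x\le b}V_+(x)\le -M\Bigr)>0 .
\]
This is the content of \cite[Proposition 2.1]{KTT17} applied to $-V$, and I would reprove it as follows. By the Markov property at times $a/2$ and $a$, together with independent increments, the probability is bounded below by the product of three factors:
\[
Q\Bigl(\sup_{x\le a/2}|V_+(x)|\le \epsilon/2\Bigr),\qquad
Q\Bigl(\sup_{x\le a/2}V_+(x)\le\epsilon/2,\ V_+(a/2)\le -M-K\Bigr),\qquad
Q\Bigl(\sup_{x\le b-a}V_+(x)\le K\Bigr),
\]
where $K>0$ is chosen large. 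The first and third factors are positive: the first by right continuity at $0$, the third because the supremum is finite almost surely.

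The main obstacle is the middle factor: the process must reach a very negative level by a fixed time while never rising above the small level $\epsilon/2$. The condition $Q(V(1)<0)>0$ ensures that the law of $V_+(t)$ charges $(-\infty,0)$ for every $t$; this uses semi-selfsimilarity and infinite divisibility, since the support of a nondegenerate infinitely divisible law that charges the negative half-line is unbounded below. To build the event, I would split $[0,a/2]$ into $N$ short pieces. On each piece I require the increment to be $\le -(M+K)/N$ with a running supremum $\le \epsilon/(2N)$ above the starting value. Each such event has positive probability: when the L\'evy measure or Gaussian part allows downward motion this holds directly, and in the remaining subordinator-with-negative-drift case it follows from the negative drift. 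Taking the product over the $N$ pieces by independent increments gives positivity of the middle factor. If the deterministic reach at small times is the issue, I would instead use the scaling relation (iii)' to transfer the event at time $1$ to the interval $[0,a/2]$.
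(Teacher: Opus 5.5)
Your reduction to one dimension is correct: some coordinate has $|u\sigma_k|\ge 1/\sqrt d$, and the other terms contribute at most $(d-1)\epsilon$. The factorization over $V_+$ and $V_-$ does tacitly need $Q(V_-(1)<0)>0$ as well, e.g.\ that $V_-$ is an independent copy of $V_+$. This is implicit in the paper and necessary, since for $\sigma=-e_1$ the sum is just $V_-(u)$.

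The paper only invokes the argument of \cite[Proposition 2.1]{KTT17}, so everything rests on your self-contained proof of the one-dimensional estimate. That proof has a genuine gap.

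First, as you formulate it (for any L\'evy process with $Q(V_+(1)<0)>0$), the estimate is false. For $V_+(t)=N_t-t$ with $N$ a Poisson process, $V_+(t)\ge -t$, so $\sup_{[a,b]}V_+\le -M$ is impossible once $M>a$. The semi-stable structure must therefore enter the path-level argument, and your treatment of the middle factor never uses it:
\begin{itemize}
\item Splitting $[0,a/2]$ into $N$ pieces reduces nothing. Each piece requires the same type of event: descent at the same mean speed $2(M+K)/a$, with an overshoot allowance proportional to the piece length.
\item In the ``subordinator with negative drift'' case, each piece has probability $0$ as soon as the drift is smaller than $2(M+K)/a$. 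So ``it follows from the negative drift'' is wrong.
\item The scaling relation (iii)' rescales the time interval and both levels simultaneously, so it cannot manufacture a fast descent.
\item Most importantly, the subordinator-with-drift case cannot occur for a nontrivial strictly semi-stable process, while your list omits a case that does occur. This is a L\'evy measure $\nu$ concentrated on $(0,\infty)$ with paths of unbounded variation, e.g.\ a spectrally positive strictly $\alpha$-stable process with $\alpha\in(1,2)$. It satisfies $Q(V(1)<0)>0$ and lies squarely within Theorem \ref{thm:semi}, yet it has no negative jumps, no Gaussian part, and is not a subordinator with drift.
\end{itemize}
The first factor has the same problem. Right continuity at $0$ controls $\sup_{[0,h]}|V_+|$ only for small $h$, not on the fixed interval $[0,a/2]$; this fails for a pure drift. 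Information about the marginal law of $V_+(t)$ also says nothing about the running supremum.

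The missing ingredient is the structure of nontrivial strictly semi-stable laws with $\alpha<2$:
\begin{itemize}
\item there is no Gaussian part;
\item for $\alpha<1$, paths have bounded variation and zero drift;
\item for $\alpha\ge1$, $\int_{0<|x|\le1}|x|\,\nu(\d x)=\infty$;
\item by scaling, $\nu$ charges arbitrarily small and arbitrarily large jumps on each half-line it charges.
\end{itemize}
With this, you argue via the L\'evy--It\^o decomposition. Take the spectrally positive case with $\alpha\in(1,2)$, where $E[V(1)]=0$. Consider the event that there is no jump larger than $\delta$ in $[0,b]$, which has probability $e^{-b\nu((\delta,\infty))}>0$. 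On it, $V_+(t)=-t\int_{(\delta,\infty)}x\,\nu(\d x)+M^\delta(t)$, where $M^\delta$ is an independent martingale with $E[\sup_{t\le b}|M^\delta(t)|^2]\le 4b\int_{(0,\delta]}x^2\,\nu(\d x)$. Since $\int_{(\delta,\infty)}x\,\nu(\d x)\to\infty$ as $\delta\downarrow0$, choosing $\delta$ small gives, with positive probability, $\sup_{[0,a]}V_+\le\epsilon$ and $\sup_{[a,b]}V_+\le -M$ simultaneously.

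When $\nu$ charges $(-\infty,0)$, prescribe finitely many negative jumps, with sizes in the support of $\nu$ and placed at suitable times. Then control the remainder the same way: by Doob's inequality for $\alpha\ge1$, or by the first-moment bound $b\int_{|x|\le\delta}|x|\,\nu(\d x)$ for $\alpha<1$. Alternatively, quote a support theorem for L\'evy processes, or cite \cite{KTT17} as the paper does.
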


Following \cite{K38}, we next prove the almost sure upper bound of sample paths for $\{V(x)\}_{x\in \R}$.
\begin{lem}\label{lem:upper}
Let $q>1/\alpha$ and $\kappa>0$.
Then $Q$-a.s., there exists $R_0>0$ such that for all $x\in {\mathbb R}$ with $|x|\ge R_0$,
$|V(x)|\le \kappa|x|^{1/\alpha}(\log |x|)^q$.
\end{lem}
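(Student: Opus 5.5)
The plan is to follow Khintchine's classical argument for L\'evy processes and combine it with the semi-selfsimilar scaling along the geometric sequence $r_0^n$ (or $r^n$ for any fixed $r>1$ when $r_0=1$). Since $V(x)=V_+(x)$ for $x\ge0$ and $V(x)=V_-(-x)$ for $x<0$, with $V_\pm$ identically distributed, it suffices to treat $V_+$ on $[0,\infty)$. Fix $r=r_0$ if $r_0>1$ and $r=2$ if $r_0=1$. For $x\in[r^{n},r^{n+1}]$ with $n$ large we have $|x|^{1/\alpha}(\log|x|)^q\ge r^{n/\alpha}(n\log r)^q$. It is therefore enough to show that, $Q$-a.s., for all large $n$,
\[
M_n:=\sup_{0\le x\le r^{n+1}}|V_+(x)|\le \kappa\, r^{n/\alpha}(n\log r)^q .
\]
By Borel--Cantelli, this follows once we check $\sum_n Q(M_n>\kappa r^{n/\alpha}(n\log r)^q)<\infty$.

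Next we reduce to a single tail estimate using scaling. By (iii)', $\{V_+(r^{n}x)\}_{x\ge0}\overset{d}{=}\{r^{n/\alpha}V_+(x)\}_{x\ge0}$, and right continuity lets us take suprema over rationals. Hence $M_n\overset{d}{=}r^{n/\alpha}M$ with $M:=\sup_{0\le x\le r}|V_+(x)|$, and
\[
Q\bigl(M_n>\kappa r^{n/\alpha}(n\log r)^q\bigr)=Q\bigl(M>\kappa (n\log r)^q\bigr).
\]
The summability over $n$ is then equivalent, up to constants, to $E[(M/\kappa)^{1/q}]<\infty$.

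The main obstacle is this moment bound. Since $q>1/\alpha$, we have $1/q<\alpha$, so it suffices to prove $E[M^\beta]<\infty$ for every $\beta\in(0,\alpha)$. The plan is the L\'evy--It\^o decomposition of $V_+$ on the compact interval $[0,r]$: split off jumps of size larger than $1$ as a compound Poisson part.
\begin{itemize}
\item The small-jump part plus drift has finite exponential moments, so its supremum has all moments.
\item The large-jump part is bounded by the sum of absolute values of its jumps over $[0,r]$, whose $\beta$-moment is finite provided $\int_{|y|>1}|y|^\beta\,\nu(\d y)<\infty$.
\end{itemize}
The last integral condition on the L\'evy measure $\nu$ holds for semi-selfsimilar processes with index $\alpha$. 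The scaling relation gives $\nu(\{|y|>r^{k/\alpha}\})=r^{-k}\nu(\{|y|>1\})$ for $k\in\mathbb Z$, see \cite[Section 13]{Sa13}. Hence $\nu(|y|>t)\le Ct^{-\alpha}$ for $t\ge1$, which is integrable against $t^{\beta-1}$ when $\beta<\alpha$.

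Alternatively, one can avoid the moment computation. Ottaviani's (or L\'evy's) maximal inequality reduces the tail of $M$ to the tail of $|V_+(r)|$ up to a constant, and the latter has finite $\beta$-moments for $\beta<\alpha$ by the same L\'evy measure bound \cite[Theorem 25.3]{Sa13}. Summing the Borel--Cantelli bounds and applying the result to both $V_+$ and $V_-$ completes the argument. Because the estimate holds for arbitrary $\kappa>0$, no further adjustment of $\kappa$ is needed.
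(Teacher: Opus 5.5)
Your proposal is correct and follows essentially the paper's argument: Borel--Cantelli along the geometric sequence $r^n$, scaling each block back to a fixed interval, and a moment bound of order less than $\alpha$ for the supremum. The differences are minor. The paper splits each block into $|V(r_0^n)|$ and the oscillation on $[r_0^n,r_0^{n+1}]$, and cites Sato for $E[\sup_{[0,t]}|V|^p]<\infty$, whereas you use a single supremum and derive that moment from the L\'evy measure scaling. Your uniform choice $r=2$ when $r_0=1$ also avoids the paper's appeal to Khintchine and its unjustified claim that the process is then symmetric.
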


\begin{proof}
By the definition of $\{V(x)\}_{x\in {\mathbb R}}$,
it is enough to show the assertion for $x\ge 0$ only.
If $r_0=1$, then $\{V(x)\}_{x\ge 0}$ is a symmetric $\alpha$-stable process,
and so the desired assertion follows by \cite{K38}.
Therefore, we may and do assume that $r_0>1$ in the subsequent argument.

Since $V(1)$ follows a strictly $\alpha$-semistable distribution,
we have $E[|V(1)|^p]<\infty$ for any $p\in [0,\alpha)$ (see \cite[Remark 1]{C94} or \cite[Example 25.10]{Sa13}).
As $\{V(x)\}_{x\ge 0}$ is a L\'evy process
and the function $g(x)=\max\{|x|^p,1\}$ is submultiplicative
(see \cite[Scholium 5.18 in p.~444 and Example in p.~445]{KS23} or \cite[Definition 25.2]{Sa13}),
we further have for any $p\in [0,\alpha)$, $x\ge0$ and $t\ge0$,
$E[|V(x)|^p]<\infty$ and $E[\sup_{y \in [0,t]}|V(y)|^p]<\infty$
 (see \cite[Theorem 5.19 in p.~445]{KS23} or \cite[Theorem  25.18]{Sa13}).

For $q>0$ and $u\ge 1$, let $c(u)=u^{1/\alpha}(\log u)^q$.
Then for any $p\in [0,\alpha)$ and $n\ge 1$,
we have by Chebyshev's inequality,
\[
Q(|V(r_0^n)|\ge c(r_0^n))\le \frac{1}{c(r_0^n)^p}E[|V(r_0^n)|^p]
=\frac{r_0^{np/\alpha}}{c(r_0^n)^p}E[|V(1)|^p]
=\frac{1}{n^{pq}(\log r_0)^{pq}}E[|V(1)|^p].
\]
In particular, for any $q>1/\alpha$, we can take some constant $p\in [0,\alpha)$ with $pq>1$ so that
\[
\sum_{n=1}^{\infty}Q(|V(r_0^n)|\ge c(r_0^n))
\le E[|V(1)|^p] \sum_{n=1}^{\infty}\frac{1}{n^{pq}(\log r_0)^{pq}}<\infty.
\]
Hence by the Borel-Cantelli lemma, $Q$-a.s.,
there exists $N_0\in {\mathbb N}$ such that for all $n\ge N_0$,
$|V(r_0^n)|\le c(r_0^n)$.

For $n\ge 1$, since $\{V(x)\}_{x\ge 0}$ has right continuous sample paths and
\[
\{V(t)-V(r_0^n)\}_{r_0^n\le t\le r_0^{n+1}}
\overset{d}{=}\{V(t-r_0^n)\}_{r_0^n\le t\le r_0^{n+1}}
\overset{d}{=}\{r_0^{n/\alpha}V(t-1)\}_{1\le t\le r_0},
\]
we have
\begin{equation*}
\begin{split}
&Q\left(\max_{t\in [r_0^n, r_0^{n+1}]\cap{\mathbb Q}}|V(t)-V(r_0^n)|\ge c(r_0^n)\right)\\
&=Q\left(r_0^{n/\alpha}\max_{t\in [1,r_0]\cap (r_0^{-n}{\mathbb Q})}|V(t-1)|\ge c(r_0^n)\right)
=Q\left(\max_{t\in [1,r_0]\cap (r_0^{-n}{\mathbb Q})}|V(t-1)|\ge r_0^{-n/\alpha}c(r_0^n)\right)\\
&\le \frac{1}{(r_0^{-n/\alpha}c(r_0^n))^p}E\left[\max_{t\in [1,r_0]\cap (r_0^{-n}{\mathbb Q})}|V(t-1)|^p \right]
\le \frac{1}{n^{pq}(\log r_0)^{pq}}E\left[\max_{t\in [1,r_0]}|V(t-1)|^p \right]
\end{split}
\end{equation*}
and so
\[
\sum_{n=1}^{\infty}Q\left(\max_{t\in [r_0^n, r_0^{n+1}]\cap{\mathbb Q}}|V(t)-V(r_0^n)|\ge c(r_0^n)\right)
\le E\left[\max_{t\in [1,r_0]}|V(t-1)|^p \right]\sum_{n=1}^{\infty}\frac{1}{n^{pq}(\log r_0)^{pq}}<\infty.
\]
Then by the Borel-Cantelli lemma again, $Q$-a.s.,
there exists $N_1\in {\mathbb N}$ such that for all $n\ge N_1$,
$\max_{t\in [r_0^n, r_0^{n+1}]\cap{\mathbb Q}}|V(t)-V(r_0^n)|\le c(r_0^n)$.

Let $N=N_0\vee N_1$, and take any $x\in [r_0^N,\infty)\cap {\mathbb Q}$.
Then there exists $n\ge N$ such that $r_0^{n}\le x\le r_0^{n+1}$ and so
\[
|V(x)|\le |V(r_0^n)|+\max_{t\in [r_0^n, r_0^{n+1}]\cap{\mathbb Q}}|V(t)-V(r_0^n)|\le 2c(r_0^n)\le 2c(x)=2x^{1/\alpha}(\log x)^q.
\]
The last inequality above follows by the monotone increasing property of the function $c(u)$.
By the right continuity of sample paths of $\{V(x)\}_{x\ge 0}$,
we also have $|V(x)|\le 2x^{1/\alpha}(\log x)^q$ for all $x\ge r_0^N$.
Moreover, if we replace the function $c(u)$ by $\kappa u^{1/\alpha}(\log u)^q/2$,
then the same argument above yields $|V(x)|\le \kappa x^{1/\alpha}(\log x)^q$ for all $x\ge r_0^N$.
The proof is complete.
\end{proof}

Recall that $\mu^W({\rm d}x)=e^{-W(x)}\,{\rm d}x$.
Then by Lemmas \ref{lem:scale-ergodic}, \ref{lem:positive} and \ref{lem:upper},
we obtain
\begin{prop}\label{prop:volume--stable}
Suppose that $Q(V(1)<0)>0$. Then the next assertions hold.
\begin{enumerate}
\item[{\rm (1)}] $\lim_{R\rightarrow\infty}\mu^W(B(0,R))=\infty$, $Q$-a.s.
\item[{\rm (2)}] Suppose that $\alpha\in (1,2)$. Then
\begin{equation}\label{eq:volume-stable}
\lim_{R\rightarrow\infty}\frac{1}{R}\log \mu^W(B(0,R))=0, \quad \text{$Q$-a.s.}
\end{equation}
\end{enumerate}
\end{prop}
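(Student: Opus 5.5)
For (1), the idea is to find, almost surely, infinitely many disjoint regions of fixed relative size on which $W\le 0$, so that each contributes a definite amount to $\mu^W(B(0,R))$. Let $r=r_0$ if $r_0>1$, and fix some $r>1$ if $r_0=1$; in the latter case Lemma \ref{lem:positive} is read with $[1,r]$ in place of $[1,r_0]$, and the same argument applies. Consider the event
\[
A=\Bigl\{w\in{\cal W}:\sup_{u\in[1,r],\,\sigma\in S^{d-1}}\sum_{k=1}^d w(u\sigma_k)\le -1\Bigr\}.
\]
Since $W(u\sigma)=\sum_k V(u\sigma_k)$, we have $Q(A)>0$ by Lemma \ref{lem:positive}. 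Moreover $A$ is measurable, because the supremum may be taken over a countable dense set, using the one-sided continuity of the paths. The next step is to see how the scaling map acts on $A$. Since $T_r^nV(x)=r^{-n/\alpha}V(r^nx)$, the event $T_r^{-n}A$ says exactly that
\[
W(x)\le -r^{n/\alpha}\quad\text{for all } x \text{ with } r^n\le |x|\le r^{n+1}.
\]
By Lemma \ref{lem:scale-ergodic} the maps $T_r^n$ are strongly mixing, hence ergodic, and Birkhoff's ergodic theorem gives $\frac1N\sum_{n<N}1_A(T_r^n\cdot)\to Q(A)>0$ almost surely. So $Q$-a.s.\ infinitely many $n$ have $T_r^nV\in A$. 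For each such $n$, $e^{-W}\ge 1$ on the annulus $\{r^n\le|x|\le r^{n+1}\}$, so $\mu^W(B(0,r^{n+1}))\ge m(\{r^n\le|x|\le r^{n+1}\})\to\infty$, and monotonicity of $R\mapsto\mu^W(B(0,R))$ gives (1).

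For (2), the lower bound $\liminf_R R^{-1}\log\mu^W(B(0,R))\ge 0$ is immediate from (1), since $\mu^W(B(0,R))\ge 1$ for large $R$. For the upper bound I would use Lemma \ref{lem:upper} with some $q>1/\alpha$ and $\kappa=1$: almost surely there is $R_0$ such that $|V(t)|\le |t|^{1/\alpha}(\log|t|)^q$ for $|t|\ge R_0$, and $V$ is bounded by some $K$ on $[-R_0,R_0]$, since the paths are càdlàg/càglàd and hence locally bounded. It follows that for every $x\in\R^d$,
\[
|W(x)|\le \sum_{k=1}^d|V(x_k)|\le dK+d\,(1+|x|)^{1/\alpha}\bigl(\log(2+|x|)\bigr)^q.
\]
Integrating over the ball,
\[
\mu^W(B(0,R))\le c_dR^d\exp\bigl(dK+d(1+R)^{1/\alpha}(\log(2+R))^q\bigr).
\]
Because $\alpha>1$, we have $1/\alpha<1$, so $R^{-1}$ times the exponent tends to $0$, and $R^{-1}\log R^d\to 0$ as well. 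This gives $\limsup_R R^{-1}\log\mu^W(B(0,R))\le 0$, which together with the lower bound proves \eqref{eq:volume-stable}.

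The step I expect to be the main obstacle is the rigorous use of ergodicity in (1). One has to check that $A$ belongs to ${\cal B}({\cal W})$, which is the countable-dense-set reduction above, and to identify $T_r^{-n}A$ with the annulus event; the latter uses the identity $T_r^nV(x)=r^{-n/\alpha}V(r^nx)$, which holds for every path. Once these are in place, everything else is routine estimation.
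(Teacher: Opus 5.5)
Your proposal is correct and follows the paper's proof essentially step for step. For (1) the paper also combines Lemma~\ref{lem:positive} with Birkhoff's theorem for the mixing map $T_r$ of Lemma~\ref{lem:scale-ergodic}, applied to the rescaled suprema over the shells $r^m\le|x|\le r^{m+1}$; for (2) it also uses the growth bound of Lemma~\ref{lem:upper} together with $1/\alpha<1$.

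The differences are cosmetic. The paper factors the integral through $B(0,R)\subset[-R,R]^d$ instead of bounding $|W|$ pointwise, and it leaves implicit the lower bound coming from (1), which you state. One small quibble: your one-sided-continuity argument for the measurability of $A$ fails at points of the outer sphere $|x|=r$, because there the coordinates cannot all be moved away from $0$ while staying in the shell. This is harmless: you can work with the half-open shell $1\le|x|<r$ instead, and the paper does not address measurability at all.
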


\begin{proof}
We first prove (1) by following the proof of \cite[Theorem 1.1]{KTT17}.
If $r_0=1$, then we let $r$ be any constant satisfying $r>1$.
If $r_0>1$, then we take  $r=r_0$.
By the definition of $W(x)$,
\begin{equation*}
\begin{split}
\mu^W(B(0,R))
=\int_{|x|\le R}e^{-W(x)}\,{\rm d}x
&=\int_{|x|\le R}e^{-\sum_{k=1}^dV(x_k)}\,{\rm d}x_1 \cdots {\rm d}x_d\\
&=\int_0^R \left(\int_{S^{d-1}}e^{-\sum_{k=1}^d V(s\sigma_k)}\,{\rm d}\sigma\right) s^{d-1}\,{\rm d}s
\end{split}
\end{equation*}
with $\sigma=(\sigma_1,\dots, \sigma_d)\in S^{d-1}$.
For $R\ge 1$, let $n$ be a positive integer such that $r^n\le R< r^{n+1}$.
Then
\begin{equation*}
\begin{split}
\int_0^R \left(\int_{S^{d-1}}e^{-\sum_{k=1}^d V(s\sigma_k)}\,{\rm d}\sigma\right) s^{d-1}\,{\rm d}s
&\ge \int_1^{r^n} \left(\int_{S^{d-1}}e^{-\sum_{k=1}^d V(s\sigma_k)}\,{\rm d}\sigma\right) s^{d-1}\,{\rm d}s \\
&=\sum_{m=0}^{n-1} \int_{r^m}^{r^{m+1}} \left(\int_{S^{d-1}}e^{-\sum_{k=1}^d V(s\sigma_k)}\,{\rm d}\sigma\right) s^{d-1}\,{\rm d}s\\
&= \sum_{m=0}^{n-1} r^{dm} \int_1^r \left(\int_{S^{d-1}}e^{-\sum_{k=1}^d V(r^m u\sigma_k)}\,{\rm d}\sigma\right) u^{d-1}\,{\rm d}u.
\end{split}
\end{equation*}
At the last equality above, we used the change of variables formula with $s=r^m u$.

Let
\[
M(m)=\sup_{u\in [1,r], \, \sigma\in S^{d-1}}\left(\sum_{k=1}^d r^{-m/\alpha}V(r^m u\sigma_k)\right).
\]
Then by (iii)' in Subsection \ref{subsect:formulation}, we have $M(m)\overset{d}{=}M(0)$ under the law $Q$.
Let $\omega_d$ be the surface area of the unit ball in $\R^d$. Then
\begin{equation*}
\begin{split}
&\sum_{m=0}^{n-1} r^{dm} \int_1^r \left(\int_{S^{d-1}}e^{-\sum_{k=1}^d V(r^m u\sigma_k)}\,{\rm d}\sigma\right) u^{d-1}\,{\rm d}s\\
&\ge \sum_{m=0}^{n-1} r^{dm} \int_1^r \left(\int_{S^{d-1}}e^{-r^{m/\alpha}M(m)}\,{\rm d}\sigma\right) u^{d-1}\,{\rm d}s\\
&=\frac{\omega_d}{d}(r^d-1)\sum_{m=0}^{n-1} r^{dm}e^{-r^{m/\alpha}M(m)}
=\frac{\omega_d}{d}(r^d-1)\sum_{m=0}^{n-1} e^{dm\log r-r^{m/\alpha}M(m)}.
\end{split}
\end{equation*}
Moreover, for any $c>0$,
\begin{equation*}
\begin{split}
\frac{\omega_d}{d}(r^d-1)\sum_{m=0}^{n-1} e^{dm\log r-r^{m/\alpha}M(m)}
&\ge \frac{\omega_d}{d}(r^d-1)\sum_{m=0}^{n-1} e^{dm\log r-r^{m/\alpha}M(m)}{\bf 1}_{(-\infty,-c]}(M(m))\\
&\ge \frac{\omega_d}{d}(r^d-1)\sum_{m=0}^{n-1} e^{dm\log r+cr^{m/\alpha}}{\bf 1}_{(-\infty,-c]}(M(m))\\
&\ge \frac{\omega_d}{d}(r^d-1)\sum_{m=0}^{n-1} {\bf 1}_{(-\infty,-c]}(M(m)).
\end{split}
\end{equation*}
We thus have
\begin{equation}\label{eq:volume-lower}
\mu^W(B(0,R))\ge \frac{\omega_d}{d}(r^d-1)\sum_{m=0}^{n-1} {\bf 1}_{(-\infty,-c]}(M(m)).
\end{equation}

Since
\[
M(0)=\sup_{u\in [1,r], \, \sigma\in S^{d-1}}\left(\sum_{k=1}^d V(u\sigma_k)\right),
\]
Lemma \ref{lem:positive} yields $Q(M(0)\le -c)>0$.
Moreover,
by the Birkhoff ergodic theorem (see, e.g., \cite[p.~34, Theorem 1.14]{W82}) and Lemma \ref{lem:scale-ergodic},
we obtain $Q$-a.s.,
\[
\lim_{n\rightarrow\infty}\frac{1}{n}\sum_{m=0}^{n-1} {\bf 1}_{(-\infty,-c]}(M(m))
=E[{\bf 1}_{(-\infty,-c]}(M(0))]=Q(M(0)\le -c)>0
\]
and so
\begin{equation}\label{eq:ergodic}
\lim_{n\rightarrow\infty}\sum_{m=0}^{n-1} {\bf 1}_{(-\infty,-c]}(M(m))=\infty.
\end{equation}
Hence by \eqref{eq:volume-lower},
\[
\liminf_{R\rightarrow\infty}\mu^W(B(0,R))
\ge \frac{\omega_d}{d}(r^d-1)\lim_{n\rightarrow\infty}\sum_{m=0}^{n-1} {\bf 1}_{(-\infty,-c]}(M(m))=\infty,
\quad \text{$Q$-a.s.}
\]
This completes the proof of (1).

We next prove (2).
Since $B(0,R)\subset [-R,R]^d$, we have
\begin{equation}\label{eq:volume-upper-1}
\begin{split}
\mu^W(B(0,R))
&=\int_{|x|\le R}e^{-W(x)}\,{\rm d}x
=\int_{|x|\le R}e^{-\sum_{k=1}^d V(x_k)}\,{\rm d}x_1\cdots{\rm d}x_d\\
&\le \int_{[-R,R]^d}e^{-\sum_{k=1}^d V(x_k)}\,{\rm d}x_1\cdots{\rm d}x_d
=\left(\int_{-R}^Re^{-V(t)}\,{\rm d}t\right)^d.
\end{split}
\end{equation}
Fix a constant $p>1/\alpha$.
Then by Lemma \ref{lem:upper},  $Q$-a.s.,
there exists $R_0>0$ such that for any $x\in {\mathbb R}$ with $|x|\ge R_0$,
\[
V(x)\ge -|x|^{1/\alpha}(\log |x|)^p.
\]
Hence for any $R>R_0$,
\begin{equation}\label{eq:volume-upper-2}
\begin{split}
\int_{-R}^Re^{-V(t)}\,{\rm d}t
&=\int_{|t|\le R_0}e^{-V(t)}\,{\rm d}t+\int_{R_0<|t|\le R}e^{-V(t)}\,{\rm d}t\\
&\le \int_{|t|\le R_0}e^{-V(t)}\,{\rm d}t+\int_{R_0<|t|\le R}e^{|t|^{1/\alpha}(\log|t|)^p}\,{\rm d}t.
\end{split}
\end{equation}
Noting that
\[
\int_{R_0<|t|\le R}e^{|t|^{1/\alpha}(\log|t|)^p}\,{\rm d}t
\sim 2\alpha \frac{\exp\left(R^{1/\alpha}(\log R)^p\right)R^{1-1/\alpha}}{(\log R)^p}, \quad R\rightarrow\infty,
\]
we obtain by \eqref{eq:volume-upper-1} and \eqref{eq:volume-upper-2},
\begin{equation*}
\begin{split}
\mu^W(B(0,R))
\le \left(\int_{-R}^Re^{-V(t)}\,{\rm d}t\right)^d
&\le \left(\int_{|t|\le R_0}e^{-V(t)}\,{\rm d}t+\int_{R_0<|t|\le R}e^{|t|^{1/\alpha}(\log|t|)^p}\,{\rm d}t\right)^d\\
&\sim (2\alpha)^d \frac{\exp\left(dR^{1/\alpha}(\log R)^p\right)R^{d(1-1/\alpha)}}{(\log R)^{dp}}, \quad R\rightarrow\infty.
\end{split}
\end{equation*}
Hence for $\alpha\in (1,2)$, we get \eqref{eq:volume-stable}.
\end{proof}

\begin{proof}[Proof of Theorem {\rm \ref{thm:semi}}]
By following the proof of Theorem \ref{thm:gauss} together with Proposition \ref{prop:volume--stable},
we have the desired assertion.
\end{proof}

\subsection{Symmetric stable environment in $d=1$}

In Theorem \ref{thm:semi}, we assumed that $\alpha\in (1,2)$.
We now drop this restriction under the additional condition that
$(\{V_+(x)\}_{x\ge 0}, Q)$ is a one-dimensional symmetric stable process.
Here the symmetry means that the two processes
$\{V_+(x)\}_{x\ge 0}$ and $\{-V_+(x)\}_{x\ge 0}$ have the same finite dimensional distribution under the law $Q$.
Note that if $d=1$, then $W(x)=V(x)$ for $x\in {\mathbb R}$.

\begin{prop}\label{prop:symm-stable}
Suppose that $d=1$ and $(\{V_+(x)\}_{x\ge 0}, Q)$ is a symmetric stable process with index $\alpha\in (0,2)$.
Then $\lambda_{{\rm ess}}^W=0$, $Q$-a.s.
In particular, $\{P_t^W\}_{t>0}$ is noncompact in $L^2({\mathbb R};\mu^W)$, $Q$-a.s.
\end{prop}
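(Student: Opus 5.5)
The plan is to exploit the one-dimensional structure and apply the noncompactness criterion of Pinsky \cite{P09} for one-dimensional operators $\frac12 e^{w}\frac{\rm d}{{\rm d}x}(e^{-w}\frac{\rm d}{{\rm d}x})$, instead of the volume growth bound of \cite{N98}. For $\alpha\in(0,1]$ the crude pathwise bound of Lemma \ref{lem:upper} is too weak to give subexponential volume growth. We therefore work with exponential functionals of the L\'evy process on each half-line.

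Fix a realization and consider the half-line $[0,\infty)$, where $W=V_+$; the half-line $(-\infty,0]$ is handled identically with $V_-$. Pinsky's criterion (in the form of \cite[Corollary 1]{P09} or its preceding theorem) expresses the bottom of the essential spectrum through Hardy-type quantities involving the two integrals
\[
\int_{x}^{\infty} e^{-W(t)}\,{\rm d}t \qquad \text{and} \qquad \int_0^{x} e^{W(t)}\,{\rm d}t.
\]
Concretely, $\lambda_{\rm ess}^W=0$ on that end follows if, for instance,
\[
\limsup_{x\to\infty}\left(\int_0^x e^{W(t)}\,{\rm d}t\right)\left(\int_x^\infty e^{-W(t)}\,{\rm d}t\right)=\infty
\]
when the speed measure is infinite, with the analogous dual condition otherwise.

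The first step is the divergence of the exponential functionals. Since $V_+$ is symmetric stable, it oscillates, so $\limsup V_+=+\infty$ and $\liminf V_+=-\infty$. Hence both $\int_0^\infty e^{V_+(t)}\,{\rm d}t=\infty$ and $\int_0^\infty e^{-V_+(t)}\,{\rm d}t=\infty$ $Q$-a.s.; one can cite the classical fact that $\int_0^\infty e^{-\xi_t}\,{\rm d}t<\infty$ iff $\xi_t\to\infty$. In particular $\mu^W$ has infinite mass on each end, and the scale function diverges.

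The second step is to show that the relevant Pinsky quantity forces $\lambda_{\rm ess}^W=0$. The cleanest route is to produce test functions supported far out with Rayleigh quotient tending to zero. For this we use the scaling and ergodicity of Lemma \ref{lem:scale-ergodic}. By stationarity of increments and a Borel--Cantelli or ergodic argument at scales $r^m$, we aim to show the following: a.s.\ there are arbitrarily long intervals $I=[a,a+\ell]$, $\ell\to\infty$, on which the oscillation of $V_+$ is at most a constant. Equivalently, the event $\{\sup_{t\in[0,\ell]}|V_+(a+t)-V_+(a)|\le 1\}$ occurs for some $a\to\infty$ with $\ell$ large. This holds because, for fixed $\ell$, these events for disjoint blocks are i.i.d.\ with positive probability.

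On such an interval, take a tent function $u$ equal to $1$ in the middle and linear near the ends. Then
\[
\mathcal E^W(u,u)\big/\|u\|^2_{L^2(\mu^W)}\lesssim e^{2}\,\ell^{-2},
\]
because $e^{-W}$ is comparable to the constant $e^{-V_+(a)}$ on $I$ up to factor $e$. Taking disjointly supported such $u_j$ escaping to infinity with $\ell_j\to\infty$ gives a Weyl sequence for $[0,\varepsilon]$ for every $\varepsilon>0$. This yields $\lambda_{\rm ess}^W=0$ directly, and this argument is actually independent of symmetry. Pinsky's criterion is used only as a consistency check or as the alternative route.

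The main obstacle is the almost sure existence of long flat stretches. For fixed $\ell$, partition $[0,\infty)$ into blocks of length $\ell$; the events $\{\text{oscillation on block}\le1\}$ are independent with common probability $p_\ell>0$. By the second Borel--Cantelli lemma they occur infinitely often. Taking $\ell=\ell_j\to\infty$ and a diagonal choice gives the sequence. The needed input is $p_\ell>0$, i.e.\ $Q(\sup_{[0,\ell]}|V_+|\le 1)>0$, the small-ball positivity for stable processes. This follows from the support theorem for L\'evy processes with a nondegenerate jump part, or from Lemma \ref{lem:positive}-type arguments. I expect verifying this cleanly for all $\alpha\in(0,2)$ to be the main technical point.
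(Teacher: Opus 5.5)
Your proof is correct, but the argument that actually carries it is not the paper's. The paper's proof is your first step followed directly by \cite[Theorem 2]{P09}. Oscillation of $V_\pm$ and \cite[Theorem 1]{BY05} give $\int e^{W}=\int e^{-W}=\infty$ on both half-lines. So scale and speed measure are both infinite at each end, and Pinsky's criterion yields $\lambda_{\rm ess}^W=0$.

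You instead build a singular sequence by hand. Independence and stationarity of increments over blocks of length $\ell$, together with $p_\ell=Q(\sup_{[0,\ell]}|V_+|\le 1)>0$, give flat stretches of every length arbitrarily far out. Tent functions on these stretches are mutually orthogonal with Rayleigh quotient $O(\ell^{-2})$, and min--max gives $\lambda_{\rm ess}^W=0$. The step you flag as the main obstacle is easy. Let $\nu$ be the L\'evy measure. With positive probability $V_+$ has no jump larger than $\eta$ on $[0,1]$. By symmetry, the independent small-jump part is a martingale with $E[\sup_{[0,1]}|\cdot|^2]\le 4\int_{|z|\le\eta}z^2\,\nu({\rm d}z)=O(\eta^{2-\alpha})$, so $p_\ell>0$ follows by scaling.

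The two routes buy different things. The paper's needs only a qualitative path property (oscillation), with no small-ball estimate, but it leans on a one-dimensional Sturm--Liouville criterion. Yours is self-contained and uses symmetry only through $p_\ell>0$. It is also not tied to $d=1$: if $V_+$ is flat on $[a,a+\ell]$, then $|W-dV_+(a)|\le d$ on the cube $[a,a+\ell]^d$. Product tent functions on such cubes then give $\lambda_{\rm ess}^W=0$ for $W=\sum_k V(x_k)$ in every dimension and for every $\alpha\in(0,2)$, provided the $p_\ell$ are positive (e.g.\ in the symmetric stable case). This includes $\alpha\in(0,1]$, which the volume-growth method behind Theorem~\ref{thm:semi} cannot reach.

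Your sketch of Pinsky's criterion is misstated, though you do not end up using it. The product condition $\limsup_{x\to\infty}\int_0^x e^{W}\int_x^\infty e^{-W}=\infty$ is the criterion when the \emph{scale} is infinite at that end. When the speed measure is infinite, the product is trivially infinite. Infinite speed measure alone does not force $\lambda_{\rm ess}=0$: for $w(x)=-|x|$ one has $\lambda^w_{\rm ess}=1/8$.
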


\begin{proof}
For $d=1$, the generator $L^W$ is written as
\[
L^W=\frac{1}{2}\frac{{\rm d}}{{\rm d}\mu^W}\frac{{\rm d}}{{\rm d}s^W}, \quad s^W(x)=\int_0^x e^{W(y)}\,{\rm d}y.
\]
If $\alpha\in [1,2)$, then $\{V_+(x)\}_{x\ge 0}$ is recurrent.
On the other hand, if $\alpha\in (0,1)$, then
$\{V_+(x)\}_{x\ge 0}$ is transient with accumulation points $\pm\infty$ (see, e.g., \cite[p.~365, Theorem 48.6]{Sa13}).
Hence for any $\alpha\in (0,2)$, we have by \cite[Theorem 1]{BY05},
\[
Q\left(\int_0^{\infty}e^{V_+(x)}\,{\rm d}x=\infty, \ \int_0^{\infty}e^{-V_+(x)}\,{\rm d}x=\infty\right)=1.
\]
This implies
\[
Q\left(\int_{-\infty}^0e^{V_-(x)}\,{\rm d}x=\infty, \ \int_{-\infty}^0 e^{-V_-(x)}\,{\rm d}x=\infty\right)=1
\]
and so
\[
Q\left(\int_{-\infty}^0 e^{\pm W(x)}\,{\rm d}x=\infty, \ \int_0^{\infty} e^{\pm W(x)}\,{\rm d}x=\infty\right)=1.
\]
Then by \cite[Theorem 2]{P09} applied to $L^W$, we arrive at the desired assertion.
\end{proof}

\section{Perturbation of diffusion operators by random drift}\label{section:per}
As we discussed in the previous sections,
the random drift perturbation does not change the noncompactness property
for the Laplace operator.
In this section, we examine if the random drift perturbation can change the compactness property
for some diffusion operators.
For $\delta>0$, let $w_1(x)=|x|^{\delta}$  and $w_2(x)=-|x|^{\delta}$ for $x\in \R^d$.
As mentioned in Section \ref{sect:prelim}, for each $k=1,2$, the diffusion operator $L^{w_k}=\frac{1}{2}\Delta-\nabla w_k \cdot \nabla$ on $\R^d$
generates a compact Markovian semigroup in $L^2(\R^d;\mu^{w_k})$ if and only if $\delta>1$.
Taking this fact into account, we focus on the noncompactness property
for the random drift perturbation of $L^{w_k}$ under the self-similar environment setting.

In what follows, we assume that $d=1$ because we will use the result of Pinsky \cite{P09}
on the (non)compactness criterion for one-dimensional diffusion operators.
We also keep the same setting as in Section \ref{e:section2.2}.
Since we assume that $d=1$, we have $W(x)=V(x)$.
For $n\ge 1$, let $m(n)=\inf_{u\in [1,r_0]}T_{r_0}^n V(u)$.
Then $m(n)$ and $m(1)$ are identically distributed by (iii)' in Subsection \ref{subsect:formulation}.

\begin{lem}\label{lem:divergent}
Suppose that $Q(V(1)>0)>0$ and $Q(V(1)<0)>0$, that is,
$\{V(x)\}_{x\in {\mathbb R}}$ changes the sign,  $Q$-a.s.
\begin{enumerate}
\item[{\rm (i)}]
For any $c>0$, $Q(m(1)>c)>0$.

\item[{\rm (ii)}] If $0<\delta\le 1/\alpha$, then $Q$-a.s.,
\begin{equation}\label{eq:divergent-1}
\int_1^{\infty}e^{-y^{\delta}+V(y)}\,\d y=\infty, \quad \int_1^{\infty}e^{-y^{\delta}-V(y)}\,\d y=\infty.
\end{equation}
On the other hand, if $\delta>1/\alpha$, then $Q$-a.s.,
\begin{equation}\label{eq:convergent-1}
\int_1^{\infty}e^{-y^{\delta}+ V(y)}\,\d y<\infty, \quad \int_1^{\infty}e^{-y^{\delta}-V(y)}\,\d y<\infty.
\end{equation}

\item[{\rm (iii)}] For any $\delta>0$, $Q$-a.s.,
\[
\int_1^{\infty}e^{y^{\delta}+V(y)}\,\d y=\infty, \quad \int_1^{\infty}e^{y^{\delta}-V(y)}\,\d y=\infty.
\]
\end{enumerate}
\end{lem}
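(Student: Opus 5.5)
For (i), I would adapt Lemma \ref{lem:positive} (the argument of \cite[Proposition 2.1]{KTT17}) in dimension one, applied to $\{V(x)\}$ itself rather than $\{-V(x)\}$. Since $Q(V(1)>0)>0$, the same argument gives, for every $c>0$, $Q(\inf_{u\in[1,r_0]}V(u)>c)>0$. When $r_0=1$ the scaling constant is any $r>1$, and I interpret $m(n)$ with $T_r$ in place of $T_{r_0}$. Because $T_{r_0}$ preserves $Q$, we get $m(1)\overset{d}{=}m(0)=\inf_{u\in[1,r_0]}V(u)$, and (i) follows.

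For the divergence in (ii), fix $\delta\le 1/\alpha$ and split $[1,\infty)$ into the blocks $[r_0^n,r_0^{n+1}]$. Substituting $y=r_0^n u$ gives
\[
\int_{r_0^n}^{r_0^{n+1}}e^{-y^\delta+V(y)}\,\d y
= r_0^n\int_1^{r_0}e^{-r_0^{n\delta}u^\delta+r_0^{n/\alpha}T_{r_0}^nV(u)}\,\d u
\ge r_0^n(r_0-1)\exp\bigl(-r_0^{n\delta}r_0^{\delta}+r_0^{n/\alpha}m(n)\bigr).
\]
On the event $\{m(n)>c\}$ with $c>r_0^\delta$, the exponent is at least $r_0^{n/\alpha}(c-r_0^\delta)\ge0$, because $r_0^{n\delta}\le r_0^{n/\alpha}$. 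Hence each block on this event contributes at least $(r_0-1)r_0^n\ge r_0-1$. By Lemma \ref{lem:scale-ergodic} and the Birkhoff ergodic theorem, exactly as in \eqref{eq:ergodic}, $\{m(n)>c\}$ occurs for infinitely many $n$, $Q$-a.s., so the integral diverges.

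The integral with $-V$ is handled the same way, using $\sup_{u\in[1,r_0]}T^n_{r_0}V(u)<-c$. Its positive probability is Lemma \ref{lem:positive} with $d=1$, since $Q(V(1)<0)>0$; ergodicity then supplies infinitely many such blocks.

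For the convergence in (ii), take $\delta>1/\alpha$ and choose $q>1/\alpha$. Lemma \ref{lem:upper} gives $|V(y)|\le y^{1/\alpha}(\log y)^q$ for $y\ge R_0$, $Q$-a.s. Then $-y^\delta\pm V(y)\le -y^\delta/2$ for large $y$, because $y^{1/\alpha}(\log y)^q=o(y^\delta)$, and both integrals converge.

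Part (iii) is the easiest. For $y\ge1$ we have $e^{y^\delta\pm V(y)}\ge e^{\pm V(y)}$, so it suffices to show $\int_1^\infty e^{\pm V(y)}\,\d y=\infty$. This follows from the block argument above with $\delta$ removed: on infinitely many blocks $\pm V\ge0$ on the whole block, each contributing at least $(r_0-1)r_0^n$. Alternatively, for $y\ge R_0$ Lemma \ref{lem:upper} gives $y^\delta\pm V(y)\ge y^\delta-y^{1/\alpha}(\log y)^q$, but that only yields divergence when $\delta>1/\alpha$, so I would use the block argument.

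The main obstacle is (i), specifically transferring the proof of \cite[Proposition 2.1]{KTT17}. That proof builds paths staying near a prescribed level using positive-probability events for increments over small intervals together with the independent increments property. The key input is support-type control of the L\'evy process on $[0,r_0]$, and it must be checked that the hypothesis $Q(V(1)>0)>0$ alone suffices.
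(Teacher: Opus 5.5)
Your proposal is correct and follows the paper's proof essentially step by step. Part (i) comes from \cite[Proposition 2.1]{KTT17}; the paper simply cites it, so the obstacle you flagged is settled there by citation. The divergence in (ii) and all of (iii) come from the block decomposition over $[r_0^n,r_0^{n+1}]$ together with the ergodicity of $T_{r_0}$, and the convergence in (ii) comes from Lemma \ref{lem:upper}. Your threshold $c>r_0^{\delta}$ is in fact the right one: the paper's choice $c>r_0$ does not suffice when $\delta=1/\alpha>1$, which is harmless since (i) holds for every $c>0$. Your explicit treatment of the $-V$ integral via Lemma \ref{lem:positive} also fills in what the paper only calls ``similar''.
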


\begin{proof}
Since (i) follows by \cite[Proposition 2.1]{KTT17},
we need to prove (ii) and (iii) only.
Note that, if $r_0=1$, then $\{V(x)\}_{x\ge 0}$ is a symmetric $\alpha$-stable process
and so the argument below holds with $r_0$ replaced by any positive constant.
Taking this into account, we assume that $r_0>1$.

We first prove \eqref{eq:divergent-1} in (ii).
We show the former equality in \eqref{eq:divergent-1} only
because the argument for the latter is similar.
Since $r_0>1$, we have
\begin{equation}\label{eq:int-1}
\begin{split}
\int_1^{\infty}e^{-y^{\delta}+V(y)}\,\d y
=\sum_{n=0}^{\infty}\int_{r_0^n}^{r_0^{n+1}} e^{-y^{\delta}+V(y)}\,\d y
&=\sum_{n=0}^{\infty}r_0^n \int_{1}^{r_0} e^{-r_0^{n\delta}z^{\delta}+V(r_0^n z)}\,\d z\\
&=\sum_{n=0}^{\infty}r_0^n \int_{1}^{r_0} e^{-r_0^{n\delta}z^{\delta}+r_0^{n/\alpha}T_{r_0}^nV(z)}\,\d z.
\end{split}
\end{equation}
Then for any $n\ge 1$,
\[
\int_{1}^{r_0} e^{-r_0^{n\delta} z^{\delta}+r_0^{n/\alpha}T_{r_0}^nV(z)}\,\d z
\ge \int_{1}^{r_0} e^{-r_0^{n\delta} z^{\delta}+r_0^{n/\alpha}m(n)}\,\d z
=e^{r_0^{n/\alpha}m(n)}\int_{1}^{r_0} e^{-r_0^{n\delta} z^{\delta}}\,\d z.
\]
For any $n\ge 1$ and $c>0$, since
\[
e^{r_0^{n/\alpha}m(n)}
\ge e^{cr_0^{n/\alpha}}{\bf 1}_{(c,\infty)}(m(n))
\]
and
\[
\int_{1}^{r_0} e^{-r_0^{n\delta} z^{\delta}}\,\d z
\ge e^{-r_0^{(n+1)\delta}}(r_0-1),
\]
we obtain
\begin{equation*}
\begin{split}
e^{r_0^{n/\alpha}m(n)}\int_{1}^{r_0} e^{-r_0^{n\delta} z^{\delta}}\,\d z
&\ge e^{cr_0^{n/\alpha}}{\bf 1}_{(c,\infty)}(m(n))e^{-r_0^{(n+1)\delta}}(r_0-1)\\
&=e^{cr_0^{n/\alpha}-r_0^{(n+1)\delta}}(r_0-1){\bf 1}_{(c,\infty)}(m(n))
\end{split}
\end{equation*}
and so
\[
\int_{1}^{r_0} e^{-r_0^{n\delta}z^{\delta}+r_0^{n/\alpha}T_{r_0}^nV(z)}\,\d z
\ge e^{cr_0^{n/\alpha}-r_0^{(n+1)\delta}}(r_0-1){\bf 1}_{(c,\infty)}(m(n)).
\]
Therefore, it follows by \eqref{eq:int-1} that
\begin{equation}\label{eq:int-2}
\int_1^{\infty}e^{-y^{\delta}+V(y)}\,\d z
\ge (r_0-1)\sum_{n=0}^{\infty}r_0^n e^{cr_0^{n/\alpha}-r_0^{(n+1)\delta}}{\bf 1}_{(c,\infty)}(m(n)).
\end{equation}

By (i) and the proof of \cite[Theorem 1.1]{KTT17},
we have for any $c>0$,
\begin{equation}\label{eq:min-div}
\lim_{N\rightarrow\infty}\sum_{n=1}^N {\bf 1}_{(c,\infty)}(m(n))=\infty, \quad \text{$Q$-a.s.}
\end{equation}
Moreover, if $0<\delta \le 1/\alpha$ and $c>r_0$,
then there exists $N_0\ge 1$ such that for any $n\ge N_0$, $cr_0^{n/\alpha}\ge r_0^{(n+1)\delta}$.
Since $r_0>1$, we have by \eqref{eq:int-2},
\[
\int_1^{\infty}e^{-y^{\delta}+V(y)}\,\d y
\ge (r_0-1)\sum_{n=N_0}^{\infty}{\bf 1}_{(c,\infty)}(m(n))=\infty, \quad \text{$Q$-a.s.}
\]
This yields the former equality in \eqref{eq:divergent-1}.

We next prove \eqref{eq:convergent-1} in (ii).
We show the former equality in \eqref{eq:convergent-1} only
because the argument for the latter is similar.
Assume that $\delta>1/\alpha$.
Then by Lemma \ref{lem:upper},
we have for any $q>1/\alpha$,
\[
Q\left(\text{there exists $R_0>0$ such that for all $x\ge R_0$, $V(x)\le x^{1/\alpha}(\log x)^q$}\right)=1.
\]
Then on the event above, we get
\[
\int_{R_0}^{\infty}e^{-y^{\delta}+V(y)}\,\d y \le \int_{R_0}^{\infty}e^{-y^{\delta}+y^{1/\alpha}(\log y)^{q}}\,\d y<\infty,
\]
which yields the former equality in \eqref{eq:convergent-1}.

We finally prove (iii).
Following the argument for \eqref{eq:int-2}, we have for any $c>0$,
\begin{equation*}
\begin{split}
\int_1^{\infty}e^{y^{\delta}+V(y)}\,\d y
\ge \int_1^{\infty}e^{V(y)}\,\d y
&\ge (r_0-1)\sum_{n=0}^{\infty}r_0^n e^{cr_0^{n/\alpha}}{\bf 1}_{(c,\infty)}(m(n))\\
&\ge (r_0-1)\sum_{n=0}^{\infty}{\bf 1}_{(c,\infty)}(m(n)).
\end{split}
\end{equation*}
By \eqref{eq:min-div}, the last expression above is divergent $Q$-a.s.
Since the similar argument yields $\int_1^{\infty}e^{y^{\delta}-V(y)}\,\d y=\infty$, $Q$-a.s.,
the proof is complete.
\end{proof}

Recall that for a locally bounded function $w:\R \to \R$,
$\{P_t^w\}_{t>0}$ is the Markovian semigroup on $L^2(\R;\mu^w)$
generated by $L^w$.
The next proposition shows that the random drift perturbation can change
the compactness property for diffusion operators $L^{w_1}$ and $L^{w_2}$.

\begin{prop}\label{prop:drift}
Suppose that $d=1$.
For each $k=1,2$, if $\delta\in (0,1/\alpha]$, then $\lambda_{{\rm ess}}^{w_k+W}=0$, $Q$-a.s.
In particular, $\{P_t^{w_k+W}\}_{t>0}$ is noncompact on $L^2({\mathbb R};\mu^{w_k+W})$, $Q$-a.s.
\end{prop}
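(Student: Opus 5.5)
We follow the proof of Proposition \ref{prop:symm-stable}: write the one-dimensional generator in Feller form and apply Pinsky's criterion \cite[Theorem 2]{P09}. For the potential $w=w_k+W$ we have
\[
L^{w}=\frac{1}{2}\frac{{\rm d}}{{\rm d}\mu^{w}}\frac{{\rm d}}{{\rm d}s^{w}},\qquad \mu^{w}({\rm d}x)=e^{-w(x)}\,{\rm d}x,\quad s^{w}(x)=\int_0^x e^{w(y)}\,{\rm d}y .
\]
As in that proof, it suffices to show that, $Q$-a.s., both $\int_0^{\infty}e^{\pm w(y)}\,{\rm d}y=\infty$ and $\int_{-\infty}^0 e^{\pm w(y)}\,{\rm d}y=\infty$. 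Pinsky's theorem then gives $\lambda_{\rm ess}^{w}=0$, and noncompactness follows.

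\textbf{Right half-line.} Here $w_1(y)=y^\delta$ and $w_2(y)=-y^\delta$. For $k=1$, the four integrals $\int_1^\infty e^{\pm(y^\delta+V(y))}\,{\rm d}y$ are
\[
\int_1^\infty e^{y^\delta+V}\,{\rm d}y,\qquad \int_1^\infty e^{-y^\delta-V}\,{\rm d}y .
\]
The first diverges by Lemma \ref{lem:divergent}(iii) for every $\delta>0$. The second diverges by Lemma \ref{lem:divergent}(ii), since $\delta\le 1/\alpha$. For $k=2$ the two integrals are $\int e^{-y^\delta+V}$ and $\int e^{y^\delta-V}$, and the same two parts of the lemma cover them. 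The integrals over $[0,1]$ are finite because $w$ is locally bounded, so they do not affect divergence.

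\textbf{Left half-line.} Substituting $y\mapsto -y$ turns $\int_{-\infty}^{-1}$ into $\int_1^\infty e^{\pm(\pm y^\delta+V_-(y))}\,{\rm d}y$. Since $V_-$ has the same law as $V_+$, Lemma \ref{lem:divergent} applies to $V_-$ verbatim; its proof uses only the law of the process on $[0,\infty)$. The only other input is that $w_k(-y)=w_k(y)$. We then intersect the finitely many full-measure events.

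\textbf{Hypotheses.} Lemma \ref{lem:divergent} requires $Q(V(1)>0)>0$ and $Q(V(1)<0)>0$, and the proposition implicitly assumes this sign-changing condition in the setting of Section \ref{e:section2.2}. I expect the main obstacle to be confirming that this condition, and the form of Pinsky's criterion, really match the statement. Specifically, one must check that \cite[Theorem 2]{P09} delivers $\lambda_{\rm ess}=0$, not merely $\lambda_{\rm ess}<\infty$, from divergence of all four integrals at both ends. This is exactly how it was invoked in Proposition \ref{prop:symm-stable}, so we reuse it in the same way.
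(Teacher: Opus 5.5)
Your proposal is correct and follows the paper's proof. On the right half-line you match each of $e^{\pm(w_k+V)}$ to Lemma \ref{lem:divergent}(ii) or (iii) exactly as the paper does, and then invoke \cite[Theorem 2]{P09}. The one difference is that the paper works at $+\infty$ alone, concluding from $\int_0^x e^{w_1+V}\,{\rm d}y\cdot\int_x^{\infty}e^{-(w_1+V)}\,{\rm d}y=\infty$, so your left-half-line step is unnecessary. It is also the only place where you use that $V_-$ has the same law as $V_+$, which Section \ref{e:section2.2} does not literally assume: there $V_-$ is only an independent $(r_0,\alpha)$-semi-selfsimilar process, and the sign-change hypothesis concerns $V(1)=V_+(1)$.
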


\begin{proof}
We prove the assertion only for $k=1$ because the argument is similar for $k=2$.
Fix $\delta\in (0,1/\alpha]$.
Then the
generator $L^{w_1+W}$ has the following expression:
\[
L^{w_1+V}=\frac{1}{2}\frac{{\rm d}}{{\rm d}\mu^{w_1+W}}\frac{{\rm d}}{{\rm d}s^{w_1+W}},
\quad \mu^{w_1+W}(\d x)=e^{-(w_1(x)+W(x))}\,\d x, \quad s^{w_1+W}(x)
=\int_0^x e^{w_1(y)+W(y)}\,{\rm d}y.
\]

Lemma \ref{lem:divergent} (ii), (iii) imply that $Q$-a.s., for any $x>0$,
\[
\int_x^{\infty}e^{w_1(y)+V(y)}\,\d y=\int_x^{\infty}e^{y^{\delta}+V(y)}\,\d y=\infty, \quad
\int_x^{\infty}e^{-(w_1(y)+V(y))}\,\d y=\int_x^{\infty}e^{-y^{\delta}-V(y)}\,\d y=\infty.
\]
Hence, we have  $Q$-a.s., for any $x>0$,
\begin{equation}\label{eq:product}
\int_0^x e^{w_1(y)+V(y)}\,\d y\cdot \int_x^{\infty}e^{-(w_1(y)+V(y))}\,\d y=\infty.
\end{equation}
Then by \cite[Theorem 2]{P09}, we obtain the desired assertion.
\end{proof}

We close this section with the following remarks.
\begin{rem}\rm
\begin{enumerate}
\item[(i)]
Assume
first
that $\alpha\in (0,1)$ and so $1/\alpha>1$.
Then for any $\delta\in (1,1/\alpha]$ and $k=1,2$,
$L^{w_k}$ generates a compact semigroup on $L^2({\mathbb R};\mu^{w_k})$,
but Proposition \ref{prop:drift} says that $L^{w_k+W}$ generates a noncompact semigroup on $L^2({\mathbb R};\mu^{w_k+W})$, $Q$-a.s.
Namely, the random drift $W$ can change the compactness property of the semigroup.
Assume
next
that $\alpha=1$.
If we take $\delta=1$, then for each $k=1,2$, $\lambda_{{\rm ess}}^{w_k}\in (0,\infty)$,
but Proposition \ref{prop:drift} yields $\lambda_{{\rm ess}}^{w_k+W}=0$, $Q$-a.s.
Therefore, the random drift $W$ can change the positivity of the bottom of the essential spectrum.

\item[(ii)]
For $\delta>1/\alpha$, we do not know if $L^{w_1+W}$ generates a compact Markovian semigroup on $L^2(\R;\mu^{w_1+W})$.
In fact, by Lemma \ref{lem:divergent} (ii), we have $Q$-a.s., for any $x>0$, $\int_x^{\infty}e^{-(y^{\delta}+V(y))}\,\d y<\infty$.
However, since we have no information about the decay rate of $\int_x^{\infty}e^{-(y^{\delta}+V(y))}\,\d y$ as $x\rightarrow\infty$,
it is unclear if the left hand side of \eqref{eq:product} is divergent or not as $x\rightarrow\infty$.
Therefore, we could not apply \cite[Theorem 2]{P09} to $L^{w_1+W}$.
On the other hand, for $\delta>1/\alpha$, we do not know if $L^{w_2+W}$ generates a compact Markovian semigroup on $L^2(\R;\mu^{w_2+W})$.
In fact, by Lemma \ref{lem:divergent} (ii),
we have  $Q$-a.s., for any $x>0$,
$
\int_x^{\infty}e^{-y^{\delta}+V(y)}\,\d y<\infty
$.
Define $h(x)=\int_x^{\infty}e^{-y^{\delta}+V(y)}\,\d y$ for $x>0$.
In order to apply \cite[Theorem 2]{P09} to $L^{w_2+W}$,
we need to calculate
\[
\limsup_{x\rightarrow\infty}\left(\frac{1}{h(x)}-\frac{1}{h(0)}\right)\int_x^{\infty}h(y)^2 e^{y^{\delta}-V(y)}\,\d y,
\]
but this value is unknown at present.
\end{enumerate}
\end{rem}

\noindent
{\bf Acknowledgements.}
The research of Yuichi Shiozawa is supported by JSPS KAKENHI Grant Number JP23K25773.
The research of Jian Wang is supported by the NSF of China the National Key R\&D Program of China (2022YFA1006003) and the National Natural Science
Foundation of China (Nos. 12225104 and 12531007).

\end{document}